\documentclass[oneside]{amsart}
\title{On generalizations of separating and splitting families}
\author{Daniel Condon}
\address{Daniel Condon, Georgia Institute of Technology}
\email{dcondon6@gatech.edu}
\author{Samuel Coskey}
\address{Samuel Coskey, Boise State University}
\email{scoskey@nylogic.org}
\urladdr{boolesrings.org/scoskey}
\author{Luke Serafin}
\address{Luke Serafin, Carnegie Melon University}
\email{lserafin@andrew.cmu.edu}
\author{Cody Stockdale}
\address{Cody Stockdale, Bucknell University}
\email{cbs017@bucknell.edu}

\date{\today}

\subjclass[2010]{05B30, 05D40, 51E30, 94B25}

\usepackage{etoolbox}\makeatletter\pretocmd{\@seccntformat}{\S}{}{}\pretocmd{\@subseccntformat}{\S}{}{}\makeatother
\usepackage{todonotes}\presetkeys{todonotes}{color=blue!20}{}
\usepackage{geometry}
\usepackage{setspace}\onehalfspacing\raggedbottom
\usepackage{mathpazo}
\usepackage{amssymb,bm}
\usepackage{mathrsfs}
\usepackage{hyperref}

\newtheorem{theorem}{Theorem}[section]
\newtheorem{lemma}[theorem]{Lemma}
\newtheorem{corollary}[theorem]{Corollary}
\newtheorem{proposition}[theorem]{Proposition}
\newtheorem*{conjecture*}{Conjecture}
\newtheorem*{theorem*}{Theorem}
\theoremstyle{definition}
\newtheorem{definition}[theorem]{Definition}
\theoremstyle{remark}
\newtheorem*{remark}{Remark}
\newtheorem{example}[theorem]{Example}
\newtheorem{question}[theorem]{Question}

\newcommand{\floor}[1]{\ensuremath{\left\lfloor #1 \right\rfloor}}
\newcommand{\ceiling}[1]{\ensuremath{\left\lceil #1 \right\rceil}}
\newcommand{\inv}{\ensuremath{{^{-1}}}}
\newcommand{\sep}{\ensuremath{\mathop{\mathrm{sep}}}}

\newcommand{\CCR}{\ensuremath{\mathrm{CCR}}}
\newcommand{\Col}{\ensuremath{\mathrm{Col}}}
\newcommand{\Aut}{\ensuremath{\mathrm{Aut}}}

\begin{document}

\begin{abstract}
  The work in this article is concerned with two different types of families of finite sets: separating families and splitting families (they are also called ``systems''). These families have applications in combinatorial search, coding theory, cryptography, and related fields. We define and study generalizations of these two notions, which we have named $n$-separating families and $n$-splitting families. For each of these new notions, we outline their basic properties and connections with the well-studied notions. We then spend the greatest effort obtaining lower and upper bounds on the minimal size of the families. For $n$-separating families we obtain bounds which are asymptotically tight within a linear factor. For $n$-splitting families this appears to be much harder; we provide partial results and open questions.
\end{abstract}

\maketitle

\section{Introduction}

Separating families, also called separating systems, play a major role in several areas of applied combinatorics. Before discussing this motivation, let us provide the definition: If $X$ is a finite set and $A,B\subseteq X$, we say that $A$ \emph{separates} $B$ if we have both $A \cap B \neq \emptyset$ and $A^c \cap B \neq \emptyset$. If $\mathcal F$ is a subset of $\mathscr P(X)$, we say $\mathcal F$ is a \emph{separating family} if for all $B\subseteq X$ of size at least $2$ there exists $A\in\mathcal{F}$ such that $A$ separates $B$.

Separating families were first studied in \cite{renyi} in connection with probabilistic questions about boolean algebras. Since then, such families have found applications in many areas, including combinatorial search, switching circuit theory, and coding theory. Numerous variations of separating families arise in the context of further applications. Since small families are typically best suited for applications, much of the theory revolves around finding bounds on the minimum size of the families.
We refer the reader to \cite{katona} for an introduction to notions and results surrounding separating families.

One of the most extensively studied variations of separating families, and one which will be featured in our own investigation, is the following: if $\mathcal{F}$ is a subset of $\mathscr P(X)$, we say $\mathcal F$ is an \emph{$(i,j)$-separating family} if for all $P,Q \subseteq X$ such that $|P| \leq i$, $|Q| \leq j$, and $P \cap Q = \emptyset$, there exists $A \in \mathcal{F}$ such that $P \subseteq A$ and $Q \cap A = \emptyset$, or vice versa. Notice that ordinary separating is equivalent to $(1,1)$-separating. Applications of $(i,j)$-separating families arise in automaton theory, see for instance \cite{harrison}.

A second notion that is closely related to separating families is that of splitting families. Here, if $X$ is a finite set and $A, B \subseteq X$, we say $A$ \emph{splits} $B$ if $|A \cap B| = \floor{|B|/2}$ or $|A \cap B| = \ceiling{|B|/2}$. If $\mathcal F$ is a subset of $\mathscr P(X)$, we say that $\mathcal{F}$ is a \emph{splitting family} if for all $B \subseteq X$ there exists $A \in \mathcal{F}$ such that $A$ splits $B$. In the definition of splitting, we allow the rounding to go either way for convenience and symmetry. Some authors have more strict rounding rules, for example \cite{roh-hahn}.

Splitting families have a less illustrious history than separating families. They first appeared in Coppersmith's algorithm for computing the discrete logarithm in the low Hamming weight case (described in \cite{stinson-baby}). Coppersmith's algorithm only requires families that split sets $B$ of a fixed given size; such families are studied in more detail in \cite{vanrees-splitting} and \cite{vanrees-constructions}. As far as we know, families that split all subsets of $X$ have not been previously studied.

In this paper, we define and study generalizations of separating and splitting families, which we call $n$-separating and $n$-splitting families, respectively. Here a family $\mathcal{F}$ of subsets of $X$ will be called \emph{$n$-separating family} if given any $B_1,\ldots,B_n\subseteq X$ there exists a single set $A\in\mathcal{F}$ such that $A$ separates each of the sets $B_i$, provided this is possible. And $\mathcal{F}$ will be called an \emph{$n$-splitting family} if for all collections $B_1,\ldots,B_n\subseteq X$ there exists a single set $A\in\mathcal{F}$ such that $A$ splits each of the sets $B_i$, again provided this is possible.

In each case, we establish the relationship between the new notions and the familiar ones. We also describe an application of $n$-separating families to error correcting codes. Moreover, we believe that both generalizations will find new applications related to the applications of separating and splitting families. We devote the greatest effort to finding bounds on the minimum size of $n$-separating families and $n$-splitting families.

Let us now briefly outline the organization and results of this article. In the next section we give an overview of separating families, including notation, examples, and basic results. In the third section we investigate $n$-separating families, beginning with the answer to the question of which collections can be separated by a single set $A$. After showing that $n$-separating families provide examples of error correcting codes, we establish the relationship between $n$-separating families and $(i,j)$-separating families for all $i,j$. Finally we establish the following lower and upper bounds on the minimal size of an $n$-separating family.

\begin{theorem*}
The minimal size of an $n$-separating family on a set of size $k$ is $\Omega(2^n\log k)$ and $O(n2^n\log k)$.
\end{theorem*}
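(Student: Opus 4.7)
I would handle the upper and lower bounds separately, with the upper bound coming from a straightforward probabilistic construction and the lower bound requiring a more delicate combination of counting and induction.

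For the upper bound $O(n\cdot 2^n\log k)$, the plan is the probabilistic method. Since $A$ separates any superset of a set it separates, I first reduce to admissible collections in which each $B_i$ is a $2$-element subset of $X$; such collections correspond to graphs $G$ on $X$ with at most $n$ edges, with $A$ separating all $B_i$ if and only if $A$ is a proper $2$-coloring of $G$, and admissibility equivalent to bipartiteness of $G$. For a uniformly random $A \subseteq X$, the probability that $A$ is a proper $2$-coloring of $G$ equals $2^{c_G-|V_G|} \geq 2^{-n}$, using that $|V_G| - c_G \leq |E(G)| \leq n$. Since the number of such graphs on $X$ is $O(k^{2n})$, a union bound shows that $m$ uniformly random subsets fail to form an $n$-separating family with probability at most $O(k^{2n})(1-2^{-n})^m$, which is less than $1$ for $m = O(n \cdot 2^n \log k)$.

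For the lower bound $\Omega(2^n \log k)$, I begin with a double-counting argument: the number of admissible collections of $n$ pairwise disjoint $2$-element subsets of $X$ is approximately $k^{2n}/(2^n n!)$, while any $A \in \mathcal{F}$ with $|A| = a$ separates at most $\binom{a}{n}\binom{k-a}{n} n!$ of them, maximized near $a = k/2$ at approximately $(k/2)^{2n}/n!$; the ratio yields $|\mathcal{F}| \geq \Omega(2^n)$. Combined with $|\mathcal{F}| \geq \log_2 k$ from ordinary $1$-separation, I then aim to amplify these into the product bound by induction on $n$. The inductive step fixes an anchor pair $\{x,y\} \subseteq X$ and partitions $\mathcal{F}$ into subfamilies $\mathcal{F}_0, \mathcal{F}_1, \mathcal{F}_*$ according to whether a given $A \in \mathcal{F}$ has $x \in A,\ y\notin A$; $x\notin A,\ y \in A$; or $\{x,y\}$ on the same side of $A$. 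After closing $\mathcal{F}$ under complementation (which at most doubles its size), the symmetry $A \leftrightarrow A^c$ gives $|\mathcal{F}_0|=|\mathcal{F}_1|$, and a suitable restriction of $\mathcal{F}_0$ to $X \setminus \{x,y\}$ is $(n-1)$-separating on a $(k-2)$-element set, so the inductive hypothesis applies.

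The main obstacle is preserving a full factor of $2$ at each inductive step, so that the bound grows as $2^n \log k$ rather than merely $\log k$. A naive induction loses this factor both to the complement-closing operation and to the restriction step. To address this, one tracks two quantities in parallel: the minimum size $f(n,k)$ of an arbitrary $n$-separating family on a $k$-set, and the minimum size $f_{\mathrm{cl}}(n,k)$ of a complement-closed one, together with the inequalities $f_{\mathrm{cl}}(n,k) \geq 2 f(n-1,k-2)$ (from the inductive step) and $f_{\mathrm{cl}}(n,k) \leq 2f(n,k)$ (from complement-closing an arbitrary family), along with the base case $f_{\mathrm{cl}}(1,k) \geq 2\lceil\log_2 k\rceil$. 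Careful bookkeeping in this system should yield the claimed bound $f(n,k) = \Omega(2^n \log k)$.
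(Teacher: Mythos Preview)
Your upper bound argument is correct and in fact cleaner than the paper's. Both use the probabilistic method with the same parameters, but where the paper proves the bound $p\geq 2^{-n}$ by a somewhat involved case-based induction on $n$, your observation that a random $A$ properly $2$-colors a bipartite graph $G$ with probability exactly $2^{c_G-|V_G|}\geq 2^{-|E(G)|}\geq 2^{-n}$ gets there in one line.

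Your lower bound, however, has a genuine gap. The two inequalities you isolate,
\[
f_{\mathrm{cl}}(n,k)\;\geq\;2\,f(n-1,k-2)
\qquad\text{and}\qquad
f(n,k)\;\geq\;\tfrac{1}{2}\,f_{\mathrm{cl}}(n,k),
\]
are both correct, but when you chain them you lose exactly the factor of $2$ you are trying to keep: combining gives only $f_{\mathrm{cl}}(n,k)\geq f_{\mathrm{cl}}(n-1,k-2)$, and iterating down to the base case $f_{\mathrm{cl}}(1,k')\geq 2\lceil\log_2 k'\rceil$ yields merely $f(n,k)=\Omega(\log k)$, not $\Omega(2^n\log k)$. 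The difficulty is structural: once you restrict $\mathcal F_0$ to $X\setminus\{x,y\}$ you get an $(n-1)$-separating family that need not be complement-closed, and closing it under complements costs you the factor of $2$ you just gained. No amount of bookkeeping with only these two quantities repairs this. Your volume argument does give $|\mathcal F|=\Omega(2^n)$, and trivially $|\mathcal F|\geq\lceil\log_2 k\rceil$, but the product bound $\Omega(2^n\log k)$ does not follow from the maximum of these.

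The paper takes an entirely different route for the lower bound: it shows (Lemma~\ref{implications}(3)) that every $n$-separating family is $(i,j)$-separating whenever $i+j\leq n+1$, and then invokes the Fredman--Koml\'os lower bound for $(\lfloor n/2\rfloor,\lceil n/2\rceil)$-separating families, which is $\Omega(2^n\log k)$. That bound is proved by an information-theoretic (entropy) argument substantially more delicate than volume counting, and this is where the multiplicative combination of the $2^n$ and $\log k$ factors actually comes from.
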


In the last section we introduce and investigate the concept of an $n$-splitting family. Splitting families turn out to be more challenging to work with than separating families. Once again, we begin our study by giving partial results on the question of which collections can be split by a single set $A$. In this case, we are able to give a complete characterization only when $n\leq3$. We then establish the following lower and upper bounds on the minimal size of a $2$-splitting family.

\begin{theorem*}
The minimum size of a $2$-splitting family on a set of size $k$ is $\Omega(k)$ and $O(k^2)$.
\end{theorem*}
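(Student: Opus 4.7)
The plan is to establish both bounds by probabilistic arguments on random subsets.

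For the lower bound, I would assume for simplicity that $4 \mid k$ and draw $B_1, B_2$ independently and uniformly from the collection of $(k/2)$-element subsets of $X$. Any such pair admits simultaneous splitting, since a set $A$ of size $k/2$ with $|A \cap B_i| = k/4$ for $i = 1,2$ always exists. Hence the $2$-splitting hypothesis provides some $A \in \mathcal{F}$ splitting both, and taking expectations (using the independence of $B_1, B_2$) yields
\[
1 \le \operatorname{E}\bigl[|\{A \in \mathcal{F} : A \text{ splits } B_1 \text{ and } B_2\}|\bigr] = \sum_{A \in \mathcal{F}} \Pr(A \text{ splits } B_1)\Pr(A \text{ splits } B_2).
\]
For each $A$ with $|A| = s$, the probability $\Pr(A \text{ splits } B_1) = \binom{s}{k/4}\binom{k-s}{k/4}/\binom{k}{k/2}$ is a hypergeometric probability, maximized at $s = k/2$, where Stirling's approximation bounds it by $O(1/\sqrt{k})$ uniformly in $s$. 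Each summand is therefore $O(1/k)$, forcing $|\mathcal{F}| = \Omega(k)$.

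For the upper bound, I would apply the probabilistic method. Let $\mathcal{F}$ consist of $N = Ck^2$ independent random subsets of $X$, each formed by including every element with probability $1/2$. The key technical lemma is that for every pair $(B_1, B_2)$ admitting joint splitting, the probability $p(B_1,B_2)$ that a single random $A$ splits both is at least $c/k$ for some absolute constant $c > 0$. Assuming the lemma, the probability a given pair is missed by $\mathcal{F}$ is at most $(1-c/k)^N \le e^{-cCk}$, and a union bound over the at most $4^k$ pairs shows that the probability $\mathcal{F}$ is not $2$-splitting is at most $4^k e^{-cCk} < 1$ for $C$ sufficiently large, establishing the existence of a $2$-splitting family of size $O(k^2)$.

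The main technical obstacle is the lemma. Writing $C = B_1 \cap B_2$ and $D_i = B_i \setminus C$, one decomposes $|A \cap B_i| = |A \cap C| + |A \cap D_i|$ and expresses $p(B_1, B_2)$ as a sum over $u = |A \cap C|$ (and the two possible splitting values when $|B_i|$ is odd) of products of three independent binomial probabilities. A local central limit estimate shows the main contribution comes from $u$ within a window of width $\Theta(\sqrt{|C|})$ of its mean, combining to give $p(B_1, B_2) = \Omega\bigl(1/\sqrt{|D_1||D_2| + |C|(|D_1|+|D_2|)}\bigr)$. Since $|D_1| + |D_2| + |C| \le k$, an elementary optimization bounds the expression under the square root by $k^2/3$, so $p(B_1,B_2) = \Omega(1/k)$. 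Boundary cases where one of $|C|, |D_1|, |D_2|$ vanishes must be checked separately but all yield the same order.
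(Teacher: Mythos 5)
Your argument is correct in outline, and your upper bound takes a genuinely different route from the paper.

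For the lower bound, your expectation computation over random $(k/2)$-subsets is essentially the paper's ``volume method'' (Lemma~\ref{lem:vol-method}) in probabilistic dress. The paper counts ordered pairs $(B_1,B_2)$ over all of $\mathscr{P}[k]^2$ and bounds the volume $v$ of a maximal splitter; you restrict to the ``hard core'' of $(k/2)$-subsets, prove the hypergeometric probability $\binom{s}{k/4}\binom{k-s}{k/4}/\binom{k}{k/2}$ is maximized at $s=k/2$, and apply linearity of expectation. Both yield $\Omega(k)$; yours is marginally tighter in the constants because it filters out easy instances, but the ideas coincide.

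For the upper bound, your route really does differ. The paper's key technical input is Theorem~\ref{disjoint-hardest}: for fixed $|S|,|T|$, the number of simultaneous splitters is a nondecreasing function of $|S\cap T|$. Proving this requires the rather delicate case analysis in Lemmas~\ref{lem:disj-hard-even} and~\ref{lem:disj-hard-mixed}. Once established, it reduces the worst case to disjoint $S,T$, where the two splitting events are independent, giving $p_{S,T}\ge p_s p_t=\Omega(1/k)$ at once. You avoid the monotonicity theorem entirely: decomposing over the three Venn regions $C=B_1\cap B_2$, $D_1$, $D_2$, writing $|A\cap B_i|=|A\cap C|+|A\cap D_i|$, and summing the three independent binomial local probabilities over the overlap count $u=|A\cap C|$ yields the direct estimate $p(B_1,B_2)=\Omega\bigl(1/\sqrt{|D_1||D_2|+|C|(|D_1|+|D_2|)}\bigr)$, whose maximum over the simplex $|C|+|D_1|+|D_2|\le k$ is $k^2/3$. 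This sidesteps the most technical part of the paper and in fact gives a sharper, size-dependent bound on $p(B_1,B_2)$ rather than a uniform $\Omega(1/k)$; the trade-off is that the local central limit estimate needs to be made uniform in the window and carefully checked for the parity-dependent target values and the degenerate cases $|C|,|D_1|,|D_2|\in\{0,1\}$, which you flag but do not carry out. Both approaches then feed into the same probabilistic union bound (Lemma~\ref{prob-method} in the paper, your union bound over $4^k$ pairs with $N=Ck^2$ random sets) to conclude $O(k^2)$.

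One further remark: if your Venn-region estimate could be pushed through for $n$ regions, it would give a more direct path to the conjectured $O(g(n)k^{n/2+1})$ upper bound than the paper's proposed route via an $n$-set analogue of Theorem~\ref{disjoint-hardest}, which is stated only as a conjecture at the end of Section~\ref{n-split-sec}.
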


We can also compute an analogous lower bound on the size of a $3$-splitting family. However, we have unfortunately not been able to establish an analogous lower bound in the case when $n\geq4$, or a useful upper bound in the case when $n\geq3$. Nevertheless, if the key results in Lemma~\ref{3-sets-splittable} and Theorem~\ref{disjoint-hardest} can be generalized to these higher cases, we would obtain the following.

\begin{conjecture*}
  The minimal size of an $n$-splitting family on a set of size $k$ is $\Omega(f(n)k^{n/2})$ and $O(g(n)k^{n/2+1})$.
\end{conjecture*}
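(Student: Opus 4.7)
The plan is to extend the $n=2$ strategy uniformly in $n$. Two main ingredients must be generalized: Lemma~\ref{3-sets-splittable}, which characterizes when a small tuple of subsets can be simultaneously split by a single set, and Theorem~\ref{disjoint-hardest}, which presumably singles out a canonical family of ``hardest'' splittable tuples---say, for $n=2$, complementary pairs $(B,B^c)$ with $|B|\approx k/2$, generalizing to partitions of $X$ into $n$ roughly-equal blocks for general $n$.

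For the splittability characterization, I would encode joint splittability of $(B_1,\ldots,B_n)$ as a $\{0,1\}$-feasibility problem for a system of $2n$ cardinality inequalities on the at most $2^n$ atoms generated by $B_1,\ldots,B_n$, then reduce feasibility to a combinatorial criterion (ideally a Hall-type or network-flow condition) via fractional relaxation and rounding. For the disjoint-hardest generalization, I would use a swapping argument: starting from any splittable $n$-tuple, iteratively modify it to reduce overlaps while not increasing the number of splitting $A$'s, until reaching a canonical disjoint form. Care is needed so the modification does not accidentally make the tuple unsplittable.

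With these two results, the upper bound $O(g(n)k^{n/2+1})$ should follow from a random construction: sample $g(n)k^{n/2+1}$ subsets of $X$ independently of size $\lceil k/2\rceil$. For any splittable $n$-tuple, the probability that a single random $A$ splits it is at least $\Omega(k^{-n/2})$ by the generalized disjoint-hardest reduction (which ensures the minimum is achieved at canonical hardest tuples) combined with $n$ Stirling estimates $\binom{k/n}{k/(2n)}/2^{k/n}\sim\sqrt{2n/(\pi k)}$. A union bound over the at most $2^{kn}$ splittable $n$-tuples then succeeds once the sample size absorbs an extra factor of $k$ coming from $\log 2^{kn}\sim kn$. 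For the lower bound $\Omega(f(n)k^{n/2})$, I would double-count incidences between canonical hardest tuples and sets $A\in\mathcal F$ that split them: if the number of such tuples is $N$ and each $A$ splits at most $M$ of them, then $|\mathcal F|\geq N/M$. In the $n=2$ specialization, following the authors' approach, refining the basic Stirling ratio (using the additional freedom of varying $|A|$ away from exact balance) should reproduce $\Omega(k)$; the $n$-variable analogue should gain an extra $k^{(n-2)/2}$ factor from $n-2$ additional independent Stirling estimates.

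The main obstacle is the generalized splittability characterization. Going from $n=3$ to $n\geq 4$, the number of atoms $B_1^{\epsilon_1}\cap\cdots\cap B_n^{\epsilon_n}$ can grow to $2^n$, and the cardinality constraints arising from the split conditions interact through genuinely new parity and summation relations not captured by the small-$n$ techniques. Until we pin down which tuples are splittable, we can neither count the ``hard enough'' tuples with precision for the lower bound, nor avoid wastefully trying to cover unsplittable tuples in the upper bound construction.
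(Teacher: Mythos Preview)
The statement you are addressing is a \emph{conjecture}, not a theorem: the paper explicitly leaves it unproved, stating only that ``if the key results in Lemma~\ref{3-sets-splittable} and Theorem~\ref{disjoint-hardest} can be generalized to these higher cases, we would obtain'' the stated bounds. There is therefore no proof in the paper to compare against; what the paper offers is exactly the conditional outline you have written up---generalize the splittability characterization, generalize the disjoint-is-hardest result, then rerun the volume method for the lower bound and the probabilistic method (Lemma~\ref{prob-method}) for the upper bound. Your plan matches this outline essentially point for point, and you correctly identify the splittability characterization for $n\geq 4$ as the main obstruction.

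One small correction to your description of the $n=2$ hardest case: Theorem~\ref{disjoint-hardest} shows the minimum number of simultaneous splitters occurs when $S\cap T=\emptyset$, not when $S,T$ are complementary; the sizes $|S|,|T|$ are then optimized separately (and by AM--GM the worst disjoint pair under $|S|+|T|\le k$ does have $|S|=|T|=k/2$, so your partition picture is the right extremal configuration, just not via complementarity). Also note that for the lower bound the paper does not need the full splittability characterization---for $n=3$ it only needs that a constant fraction of triples are splittable (Theorem~\ref{3-splitting-lower}), which is weaker than a complete characterization and may be more tractable for general $n$ than the full feasibility criterion you propose.
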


\noindent\textbf{Acknowledgement.} This article represents a part of the authors' work during the Boise State University 2014 math REU program. The authors were supported by NSF grant DMS 1359425, and the Boise State University mathematics department.

\section{separating families}
\label{sep-sec}

In this section we review some background material on separating families. Some of the results will be needed in the following sections, others are interesting in their own right. We begin with the well well-known formula for the minimum size of a separating family. We also describe several equivalent formulations of separating families. We use one of these formulations to give a linear-time algorithm for deciding whether a given family is in fact a separating family. Finally, we discuss the classification of separating families up to a natural equivalence and describe a method to determine the number of inequivalent separating families of a given size.

To begin, recall from the introduction that if $A,B\subseteq X$ then $A$ \emph{separates} $B$ if both $A\cap B\neq\emptyset$ and $A^c\cap B\neq\emptyset$. Note that $A$ separates $B$ if and only if $A^c$ separates $B$.
Further recall that $\mathcal F\subseteq\mathscr{P}(X)$ is a \emph{separating family} if for every $B\subseteq X$ such that $|B|\geq2$ there exists $A\in\mathcal F$ such that $A$ separates $B$. 

We now show that in the definition of a separating family, it is equivalent to consider just the two-element sets $B\subseteq X$.

\begin{proposition}
  A family $\mathcal F \subseteq \mathscr{P}(X)$ is a separating family if and only if for every $b\subseteq X$ with $|b| = 2$ there exists $A \in \mathcal F$ such that $|A \cap b| = 1$.
\end{proposition}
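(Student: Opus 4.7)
The plan is to prove the two directions separately, with the forward direction being essentially immediate and the backward direction following from a straightforward reduction to pairs.

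For the forward implication, I would simply observe that if $\mathcal{F}$ is a separating family, then in particular it separates every two-element subset $b = \{x,y\}$ of $X$. By definition of separating, there is $A \in \mathcal{F}$ with $A \cap b \neq \emptyset$ and $A^c \cap b \neq \emptyset$. Since $|b| = 2$, this forces exactly one of $x, y$ to lie in $A$ and the other in $A^c$, i.e.\ $|A \cap b| = 1$.

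For the backward implication, suppose that for every two-element $b \subseteq X$ there exists $A \in \mathcal{F}$ with $|A \cap b| = 1$. Let $B \subseteq X$ with $|B| \geq 2$ be arbitrary. I would pick any two distinct elements $x, y \in B$ and apply the hypothesis to $b = \{x,y\}$ to obtain $A \in \mathcal{F}$ with $|A \cap \{x,y\}| = 1$. Without loss of generality $x \in A$ and $y \notin A$. Then $x \in A \cap B$, so $A \cap B \neq \emptyset$, and $y \in A^c \cap B$, so $A^c \cap B \neq \emptyset$. Hence $A$ separates $B$, as required.

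There is no real obstacle here: the proof amounts to noting that separating a set of size at least $2$ only depends on exhibiting two of its elements on opposite sides of some $A \in \mathcal{F}$. The only thing to take care of is that separating a $2$-element set is exactly the condition $|A \cap b| = 1$, which uses the assumption $|b| = 2$ in an essential way.
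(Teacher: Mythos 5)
Your proof is correct and follows essentially the same approach as the paper: the forward direction is immediate, and for the converse you pick two distinct elements of $B$, separate that pair, and observe the separator works for $B$. The only difference is that you spell out the forward direction in more detail, while the paper simply calls it immediate.
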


\begin{proof}
  The ``only if'' direction of the equivalence is immediate. Conversely, suppose $\mathcal F$ separates all pairs $b$, and let $B \subseteq X$ with $|B| \ge 2$. Choose any $x,y \in B$ such that $x \ne y$; then $\{x,y\}$ is a pair so there exists $A \in \mathcal F$ such that
$|A \cap \{x,y\}| = 1$. It follows that $A \cap B \ne \emptyset$ and $A^c \cap B \ne \emptyset,$ so $B$ is separated by $A$.
\end{proof}

The following calculation of the minimum size of a separating family on a set of size $k$ is well-known, and typically attributed to R\'enyi. As usual, the symbol $[k]$ denotes the set $\{1,\ldots,k\}$.

\begin{theorem}
  \label{min-sep}
  The minimum size of a separating family on $[k]$ is $\lceil\log k\rceil$.
\end{theorem}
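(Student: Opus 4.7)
My plan is to prove both bounds by passing to the natural correspondence between families $\mathcal{F}=\{A_1,\dots,A_m\}\subseteq \mathscr{P}([k])$ and functions $\chi\colon[k]\to\{0,1\}^m$, where $\chi(x)_i=1$ iff $x\in A_i$. Under this correspondence, the separating property translates to a clean injectivity condition, and the minimum size calculation reduces to a counting argument.

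For the lower bound, I would first observe that by the preceding proposition, $\mathcal{F}$ is a separating family iff for every pair $\{x,y\}\subseteq[k]$ there is some $A_i\in\mathcal{F}$ with $|A_i\cap\{x,y\}|=1$, which in terms of $\chi$ says $\chi(x)_i\neq\chi(y)_i$. Hence $\mathcal{F}$ is separating iff $\chi$ is injective. Since $\chi$ maps into a set of size $2^m$, injectivity forces $2^m\geq k$, i.e.\ $m\geq\lceil\log k\rceil$.

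For the upper bound, I would take $m=\lceil\log k\rceil$ and construct an explicit separating family. Since $2^m\geq k$, I can fix an injection $\sigma\colon[k]\to\{0,1\}^m$ and define $A_i=\{x\in[k]:\sigma(x)_i=1\}$ for $i=1,\dots,m$. Then $\chi=\sigma$ is injective by construction, so by the equivalence above, $\mathcal{F}=\{A_1,\dots,A_m\}$ is separating, witnessing that the minimum size is at most $\lceil\log k\rceil$.

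Neither step is really an obstacle here; the main content is simply setting up the bijection between families and binary labelings of $[k]$ cleanly, after which both directions are a one-line counting argument. I would write the proof in two short paragraphs corresponding to the two bounds, explicitly invoking the preceding proposition to reduce separating to the two-element case before introducing $\chi$.
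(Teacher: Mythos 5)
Your proposal is correct and is essentially the paper's proof in different notation: your map $\chi$ assigns to each $x \in [k]$ exactly the column vector of the matrix representation, and your observation that separating is equivalent to injectivity of $\chi$ is precisely the paper's lemma that $\mathcal F$ is separating iff the matrix representation has pairwise distinct columns. Both the lower-bound counting argument ($2^m \geq k$) and the upper-bound construction (binary labels / an injection into $\{0,1\}^m$) match the paper's proof.
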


Note that although we are primarily interested in the case of a finite set $[k]$, the result also holds with $[k]$ replaced by any cardinal $\kappa$ and $\lceil\log k\rceil$ replaced by $\min \{\lambda\mid 2^\lambda \ge \kappa\}$.

Before giving the proof of Theorem~\ref{min-sep}, it will be convenient to introduce a matrix representation of separating families. We use the notation  $\mathcal M_{m,k}$ for the set of $m \times k$ matrices over $\mathbb F_2$.

\begin{definition}
  Let $\mathcal F$ be a family of subsets of $[k]$, $|\mathcal F| = m$, and let $M\in\mathcal M_{m,k}$. Then $M$ is said to be a \emph{matrix representation} of $\mathcal F$ if there is some enumeration $\mathcal F=\{A_1,\ldots, A_m\}$ such that for all $i,j$ we have $M_{ij}=1$ if and only if $j\in A_i$.
\end{definition}

In other words, a matrix representation of $\mathcal F$ is a matrix whose rows are precisely the characteristic vectors of the elements of $\mathcal F$. Since we typically regard families $\mathcal F$ as unordered, matrix representation is only well-defined up to row permutation. Separating families can be characterized by their matrix representations as follows.

\begin{lemma}
  A family $\mathcal F$ of subsets of $[k]$ is a separating family on $[k]$ if and only if each matrix representation $M$ of $\mathcal F$ has pairwise distinct columns. 
\end{lemma}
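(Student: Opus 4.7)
The plan is to unpack what column equality means in terms of the indicator structure and then invoke the preceding proposition, which reduces separating to separating all $2$-element subsets.

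First I would fix an enumeration $\mathcal F = \{A_1,\ldots,A_m\}$ and let $M$ be the corresponding matrix representation. For each $j \in [k]$, the $j$th column of $M$ is the vector $(\mathbf{1}_{A_1}(j),\ldots,\mathbf{1}_{A_m}(j))^T$, which records exactly those sets $A_i$ containing $j$. The key observation is then the following translation: for two indices $j \ne j'$, the $j$th and $j'$th columns of $M$ are equal if and only if every $A_i \in \mathcal F$ satisfies $j \in A_i \Leftrightarrow j' \in A_i$, which is equivalent to saying that no $A_i$ satisfies $|A_i \cap \{j,j'\}| = 1$, i.e., no element of $\mathcal F$ separates the pair $\{j,j'\}$.

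With this translation in hand, both directions follow immediately. For the forward direction, if $\mathcal F$ is separating, then by the previous proposition every pair $\{j,j'\}$ is separated by some $A_i$, so by the contrapositive of the observation above, all columns of $M$ are pairwise distinct. For the converse, if every pair of columns of $M$ is distinct, then for every pair $\{j,j'\} \subseteq [k]$ there exists $A_i \in \mathcal F$ with $|A_i \cap \{j,j'\}| = 1$, and the previous proposition then gives that $\mathcal F$ is separating. Since the argument refers only to which sets contain which elements, it is independent of the choice of enumeration, so it applies to every matrix representation of $\mathcal F$.

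No step here should present a real obstacle; the entire content of the lemma is the dictionary between ``column of the incidence matrix at $j$'' and ``the $\{0,1\}$-pattern of membership of $j$ in elements of $\mathcal F$.'' The only minor care needed is to note explicitly that the conclusion is invariant under row permutation, so that the phrase ``each matrix representation'' in the statement is justified.
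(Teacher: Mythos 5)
Your proof is correct and follows essentially the same route as the paper's: the core observation in both is that two columns $j,j'$ agree precisely when no element of $\mathcal F$ separates $\{j,j'\}$, combined with the earlier reduction to separating pairs. Your extra remark about invariance under row permutation is a reasonable (if minor) addition.
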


\begin{proof}
  Let the matrix $M$ represent $\mathcal F$. Suppose columns $i$ and $j$ of $M$ are identical. Then for every $A \in \mathcal F$, either $\{i,j\} \subseteq A$ or $\{i,j\} \cap A = \emptyset$, so $\mathcal F$ does not separate the pair $\{i,j\}$.

  Conversely, suppose $M$ has distinct columns. Then for every pair $\{i,j\}$ the columns $i,j$ disagree at some row, say row $n$, in which
case the set corresponding to row $n$ separates $\{i,j\}$.
\end{proof}

We are now ready to prove R\'enyi's formula.

\begin{proof}[Proof of Theorem \ref{min-sep}]
  Given $k$, we show how to find the matrix representation $M$ of a separating family on $[k]$ of size $\lceil\log k\rceil$. The numbers $0,\ldots,k-1$ can each be written in binary using $\lceil\log k\rceil$ many digits. Thus we may let these $k$ many binary strings be the $k$ distinct columns of a matrix $M$ with $\lceil\log k\rceil$ many rows.

  Conversely, let $\mathcal F$ be an arbitrary separating family on $[k]$, and $M$ is its matrix representation. Then since there are fewer than $k$ many distinct binary strings of length $\lceil\log k\rceil-1$, we see that $M$ must have at least $\lceil\log k\rceil$ many rows.
\end{proof}

\begin{example}
  \label{sep-example}
  The separating family on $[8]$ obtained from the proof of the previous result is $\mathcal{F} = \{\{1,2,3,4\},\{1,2,5,6\},\{1,3,5,7\}\}$. The matrix representation of $\mathcal F$ is shown in Table~\ref{sep-table}.
\begin{table}[h]
\begin{tabular}{|l|l|l|l|l|l|l|l|}
\hline
$\bullet$ & $\bullet$ & $\bullet$ & $\bullet$ &  &  &  & $\phantom{\bullet}$\\ 
\hline
$\bullet$ & $\bullet$ &  &  & $\bullet$ & $\bullet$ &  & \\ 
\hline
$\bullet$ &  & $\bullet$ &  & $\bullet$	&  & $\bullet$ & \\ 
\hline
\end{tabular}
\caption{The matrix representation of the family in Example~\ref{sep-example}. Here a $\bullet$ denotes a $1$ and an empty square denotes a $0$.\label{sep-table}}
\end{table}
\end{example}

\begin{theorem}
  There is an algorithm to recognize separating families with time complexity linear in the size of the matrix representing the family.
\end{theorem}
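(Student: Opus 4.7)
The plan is to exploit the matrix representation lemma proved immediately above: $\mathcal F$ is separating if and only if the $k$ columns of its matrix representation $M \in \mathcal M_{m,k}$ are pairwise distinct. The input size is $mk$, so the goal is to decide distinctness of $k$ binary strings of length $m$ in time $O(mk)$. The naive algorithm that compares all pairs of columns runs in $\Theta(mk^2)$, which is too slow; I will instead use a standard string-processing trick so that the work attributable to each column is $O(m)$.

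Concretely, I would build a binary trie of depth at most $m$ on the fly. First I initialize a single root node. Then, iterating through the columns of $M$ from left to right, I treat the $j$th column as a binary string $c_j \in \{0,1\}^m$ and insert $c_j$ into the trie by descending from the root, reading one bit at a time, moving to the left child on a $0$ and to the right child on a $1$, and creating children as needed. After $m$ bits the descent terminates at a leaf, which I flag as occupied. If at any insertion the descent terminates at a leaf that is already flagged, then $c_j$ coincides with some earlier $c_{j'}$, and the algorithm halts outputting ``not separating''; otherwise, after all $k$ columns are inserted without collision, it outputs ``separating''. Correctness is immediate from the matrix lemma.

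For the complexity bound, each insertion performs a constant-time operation (bit read, child pointer lookup or creation, leaf-flag check) at each of at most $m$ levels, so one insertion costs $O(m)$. Summing over $k$ columns gives $O(mk)$, which is linear in the size of $M$. The work of reading the matrix in the first place is also $O(mk)$, and trie allocation amortizes to $O(1)$ per node created, so the total is still $O(mk)$.

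The only part of the argument that needs any care is checking that no hidden superlinear cost creeps in through the trie bookkeeping, but this is routine: the trie has at most $mk + 1$ nodes and each node is touched $O(1)$ times during the relevant insertion. If one prefers a representation-independent alternative, LSD radix sort on the $k$ columns (viewed as length-$m$ binary strings) also runs in $O(mk)$, followed by a single $O(mk)$ sweep looking for adjacent duplicates; this yields exactly the same conclusion.
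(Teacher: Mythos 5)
Your proof is correct and rests on the same key lemma as the paper (separating iff the $k$ columns of $M$ are pairwise distinct), but your primary implementation --- an on-the-fly binary trie that detects a repeated column during insertion --- differs from the paper's, which radix-sorts the columns and then scans once for adjacent duplicates. You in fact mention the radix-sort-plus-adjacent-scan variant at the end as an alternative, which is exactly the paper's algorithm. Both approaches give the same $O(mk)$ bound; the trie has the minor advantage of allowing early termination as soon as a duplicate is found (and works in a single pass without reordering), while the sorting approach is arguably simpler to describe and analyze and uses $O(1)$ auxiliary space beyond the sorted array. One small point worth flagging in your trie analysis: the trie can have up to $mk+1$ nodes, so the space cost is $\Theta(mk)$ in the worst case --- the same order as the input, so it does not affect the claimed time bound, but it is not the $O(k)$ extra space one might naively hope for; the radix-sort variant is more economical in this respect.
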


\begin{proof}
  Given $M\in\mathcal M_{m,k}$, we first sort the columns of $M$ using radix sort. We then check whether any pairs of adjacent columns are equal. If any pair of adjacent columns are identical, then the family is not separating. Both the sort and the comparisons require $O(mk)$ many bit comparisons.
\end{proof}

We now turn to the classification of separating families. As motivation, we remark that given a separating family $\mathcal F$ on $[k]$ it is possible to obtain many new separating families of the same size simply by renaming the elements of $[k]$. However, since these new families contain no new information, it is natural to consider separating families only up to an equivalence which we presently define.

\begin{definition}
  Separating families $\mathcal F$ and $\mathcal G$ are said to be \emph{equivalent}, written $\mathcal F \equiv \mathcal G$, if $\mathcal{G}$ can be obtained from $\mathcal{F}$ by means of the following operations:
  \begin{itemize}
  \item replace an element of $\mathcal F$ with its complement;
  \item permute the elements of $[k]$.
  \end{itemize}
\end{definition}

The next definition captures this equivalence at the level of the matrix representations.

\begin{definition}
  The group \emph{$\CCR_{m,k}$}, or simply $\CCR$ if $m$ and $k$ are understood, is the permutation group acting on $\mathcal M_{m,k}$ generated by the following operations:
\begin{itemize}
\item \textbf{C}omplementation: replace any row $\mathbf{v}$ of a matrix with $\mathbf{1}-\mathbf{v}$ (corresponds to taking the complement of an element of $\mathcal F$; here $\mathbf{1}$ denotes to the vector of $1$'s);
\item \textbf{C}olumn permutation (corresponds to permuting the elements of $[k]$); and
\item \textbf{R}ow permutation (corresponds to reordering the elements of $\mathcal F$).
\end{itemize}
\end{definition}

Thus two separating families are equivalent if and only if they have matrix representations which lie in the same orbit of the CCR group. We can shed further light on this equivalence by means of the following representation of $m\times k$ matrices as subsets of the $m$-dimensional Hamming cube. Let us denote the $m$-dimensional Hamming cube by $Q_m$, and the set of $k$-element subsets of $Q_m$ by $[Q_m]^k$.

\begin{definition}[Cube representation]
  Let $\mathcal F$ be a separating family on $[k]$ and let $C$ be an element of $[Q_m]^k$. We say that $C$ is a \emph{cube representation} of $\mathcal F$ if $C$ is the set of column vectors of some matrix representation $M$ of $\mathcal F$.
\end{definition}

We next show that separating families are classified up to equivalence by their cube representations. In the following, we let $\Col_{m,k}$ denote the subgroup of $\CCR_{m,k}$ consisting of just the column permutations, and let $\mathcal M_{m,k}/\Col_{m,k}$ denote the set of $\Col_{m,k}$-orbits. We also let $\Aut(Q_m)$ denote the group of symmetries of the Hamming cube $Q_m$.

\begin{theorem}
  Assume $2^m \le k$. The action of $\, \CCR_{m,k}/\Col_{m,k}$ on $\mathcal M_{m,k}/\Col_{m,k}$ is equivariantly isomorphic with the translation action of $\Aut(Q_m)$ on $[Q_m]^k$.

  In particular, if $\mathcal F,\mathcal G$ are separating families on $[k]$, then $\mathcal F \equiv \mathcal G$ if and only if each cube representations of $\mathcal F$ and $\mathcal G$ lie in the same $\Aut(Q_m)$-orbit.
\end{theorem}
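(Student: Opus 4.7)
The plan is to exhibit both a group isomorphism $\CCR_{m,k}/\Col_{m,k}\cong\Aut(Q_m)$ and a bijection $\Phi\colon\mathcal M_{m,k}/\Col_{m,k}\to[Q_m]^k$ on matrices with distinct columns (equivalently, matrix representations of separating families), and then verify that $\Phi$ intertwines the two actions. As a preliminary, I would check that $\Col_{m,k}$ is normal in $\CCR_{m,k}$, so that the quotient group acts on the quotient set. This is automatic: column permutations act only on column indices while row permutations and row complementations act only on row indices, so every column permutation commutes with every row operation. In fact $\CCR_{m,k}$ splits as a direct product $\Col_{m,k}\times R_m$, where $R_m$ denotes the subgroup generated by all row operations.

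To identify $R_m$ with $\Aut(Q_m)$, I would observe that the $m$ row complementations commute pairwise and each have order two, yielding a copy of $(\mathbb Z/2)^m$; the row permutations form $S_m$; and conjugation by $\sigma\in S_m$ carries row-$i$ complementation to row-$\sigma(i)$ complementation. These are exactly the defining relations of the hyperoctahedral group $(\mathbb Z/2)^m\rtimes S_m=\Aut(Q_m)$. The explicit isomorphism $R_m\to\Aut(Q_m)$ sends a row permutation $\sigma$ to the coordinate permutation $\sigma$ of $Q_m=\mathbb F_2^m$ and sends the complementation of row $i$ to the flip of coordinate $i$.

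Finally I would define $\Phi$ by sending the $\Col_{m,k}$-orbit of a matrix $M$ to the set of its columns, viewed as vertices of $Q_m$. This is well-defined since column permutations preserve the set of columns, and when restricted to matrices with distinct columns it is a bijection onto $[Q_m]^k$: every $k$-subset of $Q_m$ can be listed as the columns of some matrix (this uses $2^m\ge k$, presumably the intended form of the stated hypothesis, since separating families on $[k]$ require at least $\lceil\log k\rceil$ rows), and the matrix is determined by the subset up to column order. Equivariance is then a direct computation: applying a row permutation $\sigma$ to $M$ permutes the coordinates of each column by $\sigma$, which is exactly how $\sigma\in\Aut(Q_m)$ acts on each cube vertex, and complementing row $i$ flips the $i$th coordinate of every column, matching the coordinate flip on $Q_m$. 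The ``In particular'' clause is then immediate: $\mathcal F\equiv\mathcal G$ iff their matrix representations are $\CCR_{m,k}$-equivalent iff (after quotienting by $\Col_{m,k}$) their cube representations lie in the same $\Aut(Q_m)$-orbit. The only subtle point is the restriction to matrices with distinct columns, which is needed so that the target of $\Phi$ is the set $[Q_m]^k$ of $k$-subsets rather than the larger set of $k$-multisets of $Q_m$; beyond this, the proof reduces to the routine generator-by-generator matching above.
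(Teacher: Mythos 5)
Your proposal is correct and takes essentially the same approach as the paper's proof sketch: both define the set bijection by sending a matrix class $[M]$ to its set of column vectors, and both construct the group isomorphism generator-by-generator (row permutations mapped to coordinate permutations, row complementations mapped to reflections across coordinate hyperplanes), with your explicit direct-product decomposition $\CCR_{m,k}=\Col_{m,k}\times R_m$ making the identification of the quotient with $\Aut(Q_m)$ slightly cleaner. Your two side observations are well taken: the hypothesis in the statement should read $2^m\ge k$ rather than $2^m\le k$ (otherwise $[Q_m]^k$ is empty and no separating families on $[k]$ of size $m$ exist), and the claimed bijection between $\mathcal M_{m,k}/\Col_{m,k}$ and $[Q_m]^k$ implicitly restricts to matrices with pairwise distinct columns, since a general matrix class corresponds to a $k$-multiset of cube vertices rather than a $k$-subset.
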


\begin{proof}[Proof sketch]
  We describe the equivariant isomorphism $(\phi,f)$. First let $f\colon \mathcal M_{m,k}/\Col_{m,k}\to[Q_m]^k$ be the natural bijection mentioned above, which carries the equivalence class $[M]$ of a matrix to the set of column vectors of $M$. Before defining $\phi$, we describe generating sets for both $\CCR_{m,k}/\Col_{m,k}$ and $H_m$. The first group, $\CCR_{m,k}/\Col_{m,k}$ is generated by elements $\bar\sigma_{ij}$ and $\bar c_i$, where $\sigma_{ij}$ exchanges rows $i$ and $j$ and $c_i$ complements row $i$, and the bar indicates the natural homomorphism from $\CCR_{m,k}$ onto $\CCR_{m,k}/\Col_{m,k}$. The second group, $H_m$, is generated by the elements $p_{ij}$ and $r_i$, where $p_{ij}$ exchanges the $i$ and $j$ coordinate axes, and $r_i$ reflects across the $i^{\textrm{th}}$ coordinate plane. We now define $\phi(\bar\sigma_{ij})=p_{ij}$ and $\phi(\bar c_i)=r_i$, and note that it is not difficult to check $\phi$ is as desired.
\end{proof}

\begin{corollary}
  Let $\sep(m,k)$ be the number of non-equivalent separating families over $[k]$ of cardinality $m$. Then $\sep(m,k)=\sep(m,2^m-k)$.
\end{corollary}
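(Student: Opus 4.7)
The plan is to reduce the statement to a simple symmetry of the Hamming cube via the classification established in the preceding theorem. That theorem tells us that separating families on $[k]$ of cardinality $m$ are classified, up to equivalence, by the $\Aut(Q_m)$-orbits on $[Q_m]^k$. Hence $\sep(m,k)$ is exactly the number of such orbits, and the corollary will follow as soon as we exhibit an $\Aut(Q_m)$-equivariant bijection between $[Q_m]^k$ and $[Q_m]^{2^m-k}$.

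The natural candidate is the set-theoretic complement map $\Phi\colon [Q_m]^k \to [Q_m]^{2^m-k}$ defined by $\Phi(C)=Q_m\setminus C$. Since $|Q_m|=2^m$, this is a well-defined bijection (with inverse given by the same formula applied to sets of size $2^m-k$). For equivariance, observe that every $\alpha\in\Aut(Q_m)$ is in particular a bijection of the underlying set $Q_m$, so it commutes with complementation: $\alpha(Q_m\setminus C)=Q_m\setminus \alpha(C)$. Therefore $\Phi$ induces a bijection between the orbit spaces $[Q_m]^k/\Aut(Q_m)$ and $[Q_m]^{2^m-k}/\Aut(Q_m)$, which yields $\sep(m,k)=\sep(m,2^m-k)$.

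There is no real obstacle here; the content is entirely supplied by the preceding classification theorem. The only point that deserves a brief comment is the hypothesis: the theorem is stated under an assumption relating $m$ and $k$, and the corollary is meaningful only when both $k\leq 2^m$ and $2^m-k\leq 2^m$, which is automatic. When $k>2^m$ (or $k=0$) both sides are $0$ and the identity holds trivially, so no separate argument is needed for edge cases.
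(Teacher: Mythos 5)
Your proof is correct and follows the same route as the paper: apply the classification theorem to reduce to counting $\Aut(Q_m)$-orbits on $[Q_m]^k$, then observe that complementation in $Q_m$ is an equivariant bijection onto $[Q_m]^{2^m-k}$. You spell out the equivariance and the edge cases a bit more explicitly than the paper does, but the argument is the same.
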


\begin{proof}
  By the previous theorem, $\sep(m,k)$ is the number of $k$-element subsets of $Q_m$ which are distinct up to symmetry, and $\sep(m,2^m-k)$ is the number of $2^m-k$-element subsets of $Q_m$ which are distinct up to symmetry. Clearly the map which carries a subset of $Q_m$ to its complement gives a bijection witnessing the equality of these two quantities.
\end{proof}

The number of distinct subsets of the $m$-cube up to cube symmetry, and hence the value of $\sep(m,k)$, can be computed using P\'olya theory. For two such calculations, see \cite{chen} and \cite{harrison-high}. Unfortunately, the formulas these articles produce each have an exponential number of terms. This leads us to conjecture that the following question has an affirmative answer.

\begin{question}
  Is the computation of the values of $\sep(m,k)$ NP-hard?
\end{question}

For more on the value of $\sep(m,k)$, see the OEIS entry \cite{oeis}, and the monograph \cite{harrison}.

\section{$n$-separating families}
\label{n-sep-sec}


In this section we introduce the new concept of an $n$-separating family. We explore the connection between $n$-separating families and the existing notion of $(i,j)$-separating families. As a consequence of this connection we obtain a lower bound on the minimum size of an $n$-separating family. We also provide an explicit construction of $2$-separating families and a probabilistic upper bound on the minimum size of an $n$-separating family in general. Also included in this section is an application of $n$-separating families to error-correcting codes.

Before defining $n$-separating families, it is first necessary to discuss which collections of $n$ subsets of $[k]$ are separable. As motivation, observe that there exist collections of sets that cannot simultaneously be separated by a single set, for example the three sets $\{1,2\},\{2,3\},\{3,1\}$.

\begin{definition}
  A collection $B_1, \ldots, B_n$ of subsets of $[k]$ is \emph{separable} if and only if $|B_i| \ge 2$ for all $i \in [n]$ and there exists $A \subseteq [k]$ such that both $A \cap B_i \ne \emptyset$ and $A^c \cap B_i \ne \emptyset$ for each $i \in [n]$.
\end{definition}

Before stating a more refined, graph-theoretic characterization of separability, we establish for technical reasons the convention that a graph contains no isolated nodes (and so is completely determined by its edge set).

\begin{proposition}
  A collection $B_1, \ldots, B_n$ of subsets of $[k]$ is separable if and only if there exist pairs $b_1 \subseteq B_1, \ldots, b_n \subseteq B_n$ such that the graph with edge set $\{b_1,\ldots,b_n\}$ is bipartite.
\end{proposition}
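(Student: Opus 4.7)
The plan is to prove the equivalence directly by translating between a separating set $A$ and a $2$-coloring of the graph.

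For the forward direction, assume the collection is separable and fix a witness $A \subseteq [k]$. For each $i$, the hypothesis $A \cap B_i \neq \emptyset$ and $A^c \cap B_i \neq \emptyset$ lets us pick $x_i \in A \cap B_i$ and $y_i \in A^c \cap B_i$; define $b_i = \{x_i, y_i\} \subseteq B_i$. Then the edge set $\{b_1, \ldots, b_n\}$ gives a graph whose vertex set is contained in $[k] = A \sqcup A^c$, and by construction every edge has one endpoint in $A$ and one in $A^c$. Thus this partition of $[k]$ restricts to a bipartition of the graph, proving it bipartite. (The required condition $|B_i| \ge 2$ automatically holds, since $x_i \ne y_i$ forces $|B_i| \ge 2$.)

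For the backward direction, assume pairs $b_i \subseteq B_i$ exist such that $G = \bigl([k], \{b_1,\ldots,b_n\}\bigr)$ is bipartite. Let $V_1, V_2$ be a bipartition of the vertex set of $G$, and extend this to a partition of all of $[k]$ arbitrarily; call the first part $A$. For each $i$, the edge $b_i = \{x_i, y_i\}$ has its two endpoints in opposite parts of the bipartition, so one lies in $A$ and the other in $A^c$. Since $b_i \subseteq B_i$, this yields $A \cap B_i \ne \emptyset$ and $A^c \cap B_i \ne \emptyset$. Moreover, the existence of a pair $b_i \subseteq B_i$ forces $|B_i| \ge 2$. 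Therefore $A$ witnesses separability of the collection.

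There is no real obstacle here; the only small subtlety is remembering that the convention preceding the statement excludes isolated vertices, so a bipartition of $G$ need not cover all of $[k]$, but this is harmless because any extension of the bipartition to $[k]$ still satisfies $A \cap b_i \ne \emptyset \ne A^c \cap b_i$. The argument is essentially the observation that a set $A$ separating each $B_i$ is the same data as a $2$-coloring of $[k]$ that is proper on some choice of edge out of each $B_i$, which is precisely what bipartiteness of the edge graph $\{b_1,\ldots,b_n\}$ records.
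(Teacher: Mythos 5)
Your proof is correct and takes essentially the same approach as the paper: both directions translate directly between a separating set $A$ and a proper $2$-coloring (equivalently, a bipartition) of the chosen edge graph. Your treatment is slightly more explicit about the $|B_i| \ge 2$ requirement and about extending the bipartition to all of $[k]$, but these are cosmetic refinements, not a different argument.
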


\begin{proof}
$(\Leftarrow)$ Suppose $b_1 \subseteq B_1, \ldots, b_n \subseteq B_n$ are such that $G = \{b_1,\ldots,b_n\}$ is the edge set of a bipartite graph. Fix a $2$-colouring $f$ of the vertices of $G$; the set $f\inv[0]$ separates the collection $b_1,\ldots,b_n$, and hence also the collection $B_1,\ldots,B_n$.

$(\Rightarrow)$ Suppose a set $A$ simultaneously separates $B_1,\ldots,B_n$. For each $i \in [n]$, let $\alpha_i \in A \cap B_i$ and $\beta_i \in A^c \cap B_i$, and define $b_i = \{\alpha_i, \beta_i\}$ and $G = \{b_1,\ldots,b_n\}$. Then $b_i \subseteq B_i$ and the function $f\colon V(G) \rightarrow \{0,1\}$ given by $f(\alpha_i) = 0$ and $f(\beta_i) = 1$ for $i \in [n]$ is a $2$-colouring of $G$, the existence of which immediately gives that $G$ is bipartite.
\end{proof}

\begin{remark}
  It is worth mentioning that a collection $B_1,\ldots,B_n$ is separable if and only if, when the collection is viewed as a hypergraph, it is $2$-colorable. The problem of recognizing hypergraph $2$-colorability is known to be NP-complete; see \cite{lovasz}.
\end{remark}

We are now prepared to define $n$-separating families.

\begin{definition}
  A family $\mathcal F \subseteq \mathscr P [k]$ is \emph{$n$-separating} if for every separable collection $B_1, \ldots, B_n \subseteq [k]$, there exists $A \in \mathcal F$ such that both $A \cap B_i \ne \emptyset$ and $A^c \cap B_i \ne \emptyset$ for each $i \in [n]$.
\end{definition}

Note that as in the case of $1$-separating families, $\mathcal F$ is $n$-separating if and only if it simultaneously separates all separable collections of $n$ pairs.

Natural examples of $n$-separating families which are not too large are not immediately apparent, but the following simple construction does allow us to give modest-sized examples of $2$-separating families.

\begin{theorem}
  \label{2-sep-construction}
  If $\mathcal F$ is a separating family on $[k]$, and $\mathcal F' = \{A \triangle B\colon A, B \in \mathcal F\}$, then $\mathcal F \cup \mathcal F'$ is a $2$-separating family.
\end{theorem}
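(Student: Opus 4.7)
The plan is to reduce the problem to pairs and then exploit the following XOR behavior of symmetric difference: for any $A, B \subseteq [k]$ and any pair $b = \{x,y\}$, the set $A \triangle B$ separates $b$ if and only if exactly one of $A, B$ separates $b$. This is a straightforward check using characteristic functions, since whether $C$ separates $\{x,y\}$ is recorded by $\chi_C(x) \oplus \chi_C(y)$, and $\chi_{A \triangle B} = \chi_A \oplus \chi_B$.

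First I would use the pair-reduction principle already established for ordinary separating families: if $b_i \subseteq B_i$ is a $2$-element subset and some single set $A$ separates both $b_1$ and $b_2$, then $A$ automatically separates both $B_1$ and $B_2$. So given any separable collection $B_1, B_2$, I pick any pairs $b_1 \subseteq B_1$ and $b_2 \subseteq B_2$ (which exist because separability forces $|B_i| \geq 2$), and the task becomes showing that some element of $\mathcal F \cup \mathcal F'$ separates both $b_1$ and $b_2$.

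Next I would split into two cases. If some single $A \in \mathcal F$ already separates both $b_1$ and $b_2$, we are done with $A \in \mathcal F$. Otherwise, every element of $\mathcal F$ separates at most one of the two pairs; since $\mathcal F$ is a separating family, I can find $A_1 \in \mathcal F$ that separates $b_1$ (and necessarily fails to separate $b_2$) and $A_2 \in \mathcal F$ that separates $b_2$ (and necessarily fails to separate $b_1$). Applying the XOR observation, exactly one of $A_1, A_2$ separates $b_1$, and exactly one of them separates $b_2$, so $A_1 \triangle A_2 \in \mathcal F'$ separates both pairs simultaneously, finishing the proof.

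There is essentially no technical obstacle here; the only thing worth emphasizing is that the XOR identity is exactly what makes symmetric differences the right closure operation to add. I would also remark in passing that any two pairs automatically form a bipartite edge set (two edges cannot make an odd cycle), so every $B_1, B_2$ with $|B_i|\geq 2$ is separable, but this observation is not strictly needed for the proof because we only use separability to guarantee $|B_i|\geq 2$.
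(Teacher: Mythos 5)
Your proof is correct and takes essentially the same route as the paper: reduce to pairs, observe that if no single $A\in\mathcal F$ separates both pairs then picking separators $A_1,A_2$ of $b_1,b_2$ respectively forces $A_1\triangle A_2$ to separate both. The only cosmetic difference is that you package the key step as the clean XOR identity $\chi_{A\triangle B}=\chi_A\oplus\chi_B$, whereas the paper counts elements of the pairs directly; the underlying argument is identical.
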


\begin{proof}
  Let $b_1$, $b_2$ be two pairs in $[k]$ and $A_1$, $A_2 \in \mathcal F$ separate $b_1, b_2$, respectively. Often one of $A_1$, $A_2$ will separate both pairs. Assuming neither does, we have that $A_1$ contains precisely one element in $b_1$ and $A_2$ contains both or zero elements in $b_1$. Then $A_1 \triangle A_2$ contains precisely one element in $b_1$. By identical reasoning, $A_1 \triangle A_2$ contains precisely one element in $b_2$, and $A_1 \triangle A_2$ separates both $b_1$ and $b_2$ simultaneously.
\end{proof}

\begin{example}
  \label{2-sep-example}
  The $2$-separating family on $[8]$ obtained by applying the previous result to Example~\ref{sep-example} is $\mathcal{F} = \{\{1,2,3,4\},\{1,2,5,6\},\{1,3,5,7\},\{3,4,5,6\},\{2,4,5,7\},\{2,3,6,7\}\}$. Its matrix representation is shown in Table~\ref{2-sep-table}.
\begin{table}[h]
\begin{tabular}{|l|l|l|l|l|l|l|l|}
\hline
$\bullet$ & $\bullet$ & $\bullet$ & $\bullet$ &  &  &  & $\phantom{\bullet}$\\ 
\hline
$\bullet$ & $\bullet$ &  &  & $\bullet$ & $\bullet$ &  & \\ 
\hline
$\bullet$ &  & $\bullet$ &  & $\bullet$	&  & $\bullet$ & \\ 
\hline
 & & $\bullet$ & $\bullet$ & $\bullet$ & $\bullet$ & & \\
\hline
& $\bullet$ & & $\bullet$ & $\bullet$ & & $\bullet$ & \\
\hline
 & $\bullet$ & $\bullet$ & & & $\bullet$ & $\bullet$ & \\
\hline
\end{tabular}
\caption{The matrix representation of the family in Example~\ref{2-sep-example}.\label{2-sep-table}}
\end{table}
\end{example}

\begin{remark}
  Theorem~\ref{2-sep-construction} provides a constructive realization of an upper bound of $O\left((\log k)^2\right)$ for the minimum size of a $2$-separating family over $[k]$. The results of \cite{stinson-etal} can be used together with Theorem~\ref{implications} below to give constructive upper bounds on the size of $n$-separating families too.
\end{remark}

Before moving on to calculating lower and upper bounds for the minimum sizes of $n$-separating families, we briefly describe an application to error-correcting codes. Let us say that a \emph{Hamming $d$-code} is an $m \times k$ binary matrix whose columns have pairwise Hamming distances $\geq d$. When used as an error-correcting code, the columns of a Hamming $d$-code can be used to detect up to $d-1$ many errors and correct up to $\floor{d/2}-1$ many errors in a message. These error-correcting codes were introduced by Hamming in~\cite{hamming}.

\begin{theorem}
  \label{n-sep-hamming}
  If $\mathcal{F}$ is $n$-separating, then the matrix representation of $\mathcal F$ is a Hamming $2^{n-1}$-code.
\end{theorem}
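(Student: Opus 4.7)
The plan is to reduce the claim to a counting statement about sets separating a single pair, and then to construct exponentially many such sets by applying $n$-separation to carefully chosen collections built from bipartite spanning trees. If $M$ is a matrix representation of $\mathcal F$, then the Hamming distance between columns $i$ and $j$ equals $|\mathcal F_{ij}|$, where $\mathcal F_{ij} := \{A \in \mathcal F : A \text{ separates } \{i,j\}\}$. Thus it suffices to show $|\mathcal F_{ij}| \geq 2^{n-1}$ for every pair $\{i,j\} \subseteq [k]$; the argument will implicitly require $k \geq n+1$, a mild nondegeneracy assumption without which the statement fails for trivial reasons.

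Fix such a pair $\{i,j\}$ and choose auxiliary elements $p_2,\ldots,p_n \in [k]\setminus\{i,j\}$, forming the universe $U = \{i,j,p_2,\ldots,p_n\}$ of size $n+1$. The $2^{n-1}$ bipartitions of $U$ with $i$ and $j$ on opposite sides are indexed by $S \subseteq \{p_2,\ldots,p_n\}$, via $X_S := \{i\} \cup S$ and $Y_S := U \setminus X_S$. For each such $S$, I would pick any spanning tree $T_S$ of the complete bipartite graph $K_{X_S, Y_S}$. Such a tree has exactly $|X_S|+|Y_S|-1 = n$ edges, so its edge set is a collection of $n$ pairs in $[k]$, and this collection is separable (witnessed by $X_S$ itself). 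Applying the $n$-separation hypothesis yields some $A_S \in \mathcal F$ separating every edge of $T_S$; in particular $A_S \in \mathcal F_{ij}$.

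The key observation is that a subset of $U$ separates every edge of the connected bipartite graph $T_S$ if and only if it equals $X_S$ or $Y_S$; this is the standard uniqueness-of-$2$-coloring for connected bipartite graphs. Hence $A_S \cap U \in \{X_S, Y_S\}$. For $S \ne S'$ the unordered pairs $\{X_S, Y_S\}$ and $\{X_{S'}, Y_{S'}\}$ are disjoint, since within each pair the side containing $i$ is uniquely determined and distinct choices of $S$ yield distinct such sides (and likewise for the side containing $j$). Therefore the $2^{n-1}$ sets $A_S$ are pairwise distinct, yielding $|\mathcal F_{ij}| \geq 2^{n-1}$.

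The main difficulty is the conceptual step of recognizing that bipartite spanning trees are the correct ``rigid'' $n$-pair collections: ones whose admissible separators on the small universe $U$ form exactly a single size-two class. Without this rigidity, $n$-separation would merely guarantee some $A$ per collection, with no control over its behavior on $U$, and distinct collections could easily share separators. Once the spanning-tree construction is identified, the remainder of the proof is routine graph-theoretic bookkeeping.
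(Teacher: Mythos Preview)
Your proof is correct and takes essentially the same approach as the paper: both fix an $(n+1)$-element set $U\supseteq\{i,j\}$ and, for each of the $2^{n-1}$ bipartitions of $U$ placing $i$ and $j$ on opposite sides, build a connected bipartite graph on $U$ with exactly $n$ edges so that any simultaneous separator must restrict to one side of that bipartition. The only cosmetic differences are that the paper uses a specific double-star tree rather than an arbitrary spanning tree, and packages the conclusion via an intermediate claim about the family $\mathcal F\restriction U$ being a ``half power set'' rather than counting separators of $\{i,j\}$ directly.
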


\begin{proof}
  The result follows from the following two claims:
  \begin{enumerate}
  \item If $\mathcal F$ is an $n$-separating family on $[k]$ and $S\subseteq[k]$ with $|S|=n+1$, then for every subset $T\subseteq S$, either $T$ or $S\smallsetminus T$ lies in $\mathcal F\restriction S = \{A \cap S\colon A \in \mathcal F\}$.
  \item If $\mathcal G$ is a family of subsets of $[n+1]$ with the property that for every $T\subset[n+1]$ either $T$ or $T^c$ lies in $\mathcal G$, then the matrix representation of $\mathcal G$ is a Hamming $2^{n-1}$-code.
  \end{enumerate}

  For claim (1), enumerate the elements $T=\{t_1,\ldots,t_j\}$ and $S\smallsetminus T=\{s_1,\ldots,s_{n+1-j}\}$. We then consider the collection of $n$ many pairs: $\{t_1,s_1\},\ldots,\{t_1,s_{n+1-j}\},\{t_2,s_1\},\ldots,\{t_j,s_1\}$. Observe that these pairs determine a connected bipartite graph with parts $T$ and $S\smallsetminus T$. Letting $A\in\mathcal F$ be a separator for this collection, we must have either $A\cap S=T$ or $A\cap S=S\smallsetminus T$, as desired.

  For claim (2), we first note that a simple induction shows that if $M$ is the matrix representation of the full power set $\mathscr P[n+1]$, then the columns of $M$ have pairwise distances exactly $2^n$.

  Next we let $M'$ be a matrix representation of $\mathcal G$ obtained by deleting half the rows of $M$. Specifically for each $A\subseteq[n+1]$ we delete either the row corresponding to $A$ or to $A^c$ (it doesn't matter which one). To see that the columns of $M'$ form a Hamming $2^{n-1}$-code, note that for each pair of columns $i,j$, exactly half of the rows we deleted disagreed in coordinates $i,j$. Thus the Hamming distance between columns $i,j$ of $M'$ is exactly $2^n-\frac{1}{4}2^{n+1}=2^{n-1}$. This completes the proof.
\end{proof}





We now proceed with our main task of finding lower and upper bounds on the minimum size of an $n$-separating family. We begin with the calculation of upper bounds.

\begin{theorem}
\label{n-sep-upper}
If $\mathcal F$ is an $n$-separating family, then $| \mathcal F | \leq \frac{2n\log k}{-\log(1-2^{-n})}$. In particular, the minimal size of an $n$-separating family of subsets of $k$ is $O(2^nn\log k)$.
\end{theorem}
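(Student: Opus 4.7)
The plan is a standard probabilistic construction. Fix a parameter $m$ to be chosen, and let $A_1,\ldots,A_m$ be independent uniformly random subsets of $[k]$. We will show that if $m$ exceeds the claimed bound, then the random family $\mathcal F=\{A_1,\ldots,A_m\}$ is $n$-separating with positive probability, so in particular some $n$-separating family of this size exists. By the remark immediately following the definition of $n$-separating, it suffices to handle separable collections of $n$ pairs.

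The core step is a lower bound on the single-subset success probability: for any fixed separable collection of $n$ pairs $b_1,\ldots,b_n$, a uniform random $A\subseteq[k]$ separates every $b_i$ with probability at least $2^{-n}$. Let $G$ be the bipartite (by separability) graph with edge set $\{b_1,\ldots,b_n\}$; let $v$ denote the number of vertices of $G$ and $p$ its number of connected components. A set $A$ separates every $b_i$ if and only if the indicator function $x\mapsto[x\in A]$ restricts to a proper $2$-coloring of $G$. Each connected bipartite graph admits exactly two proper $2$-colorings, and the $k-v$ elements outside $V(G)$ may be included in $A$ freely, so the number of separators is exactly $2^p\cdot 2^{k-v}$, giving a probability of $2^{p-v}$. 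Since a connected component with $e_i$ edges has at most $e_i+1$ vertices, we obtain $v\leq\sum_i(e_i+1)=n+p$, and hence $2^{p-v}\geq 2^{-n}$.

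A union bound then finishes the proof. The number of ordered $n$-tuples of pairs from $[k]$ is at most $\binom{k}{2}^n\leq k^{2n}$, so the probability that $\mathcal F$ fails to separate some separable collection is at most $k^{2n}(1-2^{-n})^m$. Requiring this quantity to fall below $1$ and solving for $m$ yields the claimed bound $m>\frac{2n\log k}{-\log(1-2^{-n})}$, giving existence of an $n$-separating family of the stated size. The asymptotic ``in particular'' statement follows from $-\log(1-2^{-n})=\Theta(2^{-n})$.

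We expect the main substantive point to be the inequality $v\leq n+p$ used in the single-subset calculation: it is precisely the fact that a connected graph on $v_i$ vertices has at least $v_i-1$ edges (equivalently, at most $e_i+1$ vertices for $e_i$ edges) that keeps the separation probability from degrading worse than $2^{-n}$. Everything else is routine probabilistic-method bookkeeping.
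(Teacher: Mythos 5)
Your proposal is correct and follows the paper's overall probabilistic strategy: lower-bound the per-set separation probability by $2^{-n}$, then apply a union bound (or, as the paper phrases it, an expectation bound via Lemma~\ref{prob-method}) over the at most $\binom{k}{2}^n$ separable $n$-tuples of pairs. The genuine difference is in how you establish the $2^{-n}$ bound, which the paper isolates as Lemma~\ref{n-sep-prob}. The paper argues by induction on $n$, showing $p_{n+1}\geq\frac12 p_n$ via a case analysis on how $b_{n+1}$ meets $b_1\cup\cdots\cup b_n$ (disjoint, one shared vertex, or both shared, with a further split by whether $b_{n+1}$ lies in a single component of the graph on $b_1,\ldots,b_n$). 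You instead compute the probability exactly: with $G$ the bipartite graph on edges $\{b_1,\ldots,b_n\}$, a set separates every $b_i$ precisely when its indicator is a proper $2$-coloring of $G$, so the number of separators is exactly $2^{c}\cdot 2^{k-v}$ (with $c$ the number of components and $v$ the number of vertices of $G$), and the spanning-tree inequality $v\leq n+c$ closes the argument. Your direct count is cleaner and yields the sharp value $2^{c-v}$ rather than merely a lower bound, at the cost of reasoning about the full component structure at once; the paper's induction handles one added edge at a time and never needs a closed form. One small imprecision: you write $\sum_i(e_i+1)=n+c$, but if $b_i=b_j$ for some $i\neq j$ then the number of distinct edges is strictly less than $n$, so the correct relation is $\leq n+c$; the conclusion $2^{c-v}\geq 2^{-n}$ is unaffected.
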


Our proof is probabilistic, and makes use of the following general lemma.


\begin{lemma}
  \label{prob-method}
  Suppose there is a set of $N$ tasks to be completed, and that for each task, a randomly chosen object completes it with probability at least $p$. Then there exists a family $\mathcal F$ of objects which jointly completes all the tasks and satisfies
\[ |\mathcal F| < \frac{\log N}{-\log (1-p)}+1\text{.}
\]
\end{lemma}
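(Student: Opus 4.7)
The plan is to apply the probabilistic method via a straightforward union bound. First I would fix an integer $m$ and sample objects $A_1, \ldots, A_m$ independently from the distribution implicit in the hypothesis (since the phrase ``a randomly chosen object'' presupposes a fixed probability measure on the underlying space of objects). For any single one of the $N$ tasks, independence of the sample gives that the probability \emph{none} of the $A_i$ completes it is at most $(1-p)^m$. Summing over the $N$ tasks, the union bound shows that the probability that the random sample fails to complete some task is at most $N(1-p)^m$.

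Next I would choose $m$ as small as possible subject to $N(1-p)^m < 1$. Since $\log(1-p) < 0$, this condition rearranges to $m > \log N / (-\log(1-p))$, so I would take $m$ to be the least integer exceeding this threshold. With this choice the probability that the random sample completes every task is strictly positive, so there exists a realization $\mathcal F \subseteq \{A_1,\ldots,A_m\}$ which jointly completes all tasks; and clearly $|\mathcal F| \le m < \log N / (-\log(1-p)) + 1$, which is the desired inequality.

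There is essentially no combinatorial obstacle here: the entire content is a single invocation of the union bound, together with the elementary algebra required to convert the failure-probability condition into the claimed explicit bound on $|\mathcal F|$. The only point worth articulating carefully is the tacit assumption that ``a randomly chosen object'' refers to a fixed probability distribution on the ambient space from which independent samples may be drawn. In the intended application to Theorem~\ref{n-sep-upper}, this distribution will simply be uniform on $\mathscr P[k]$, and the per-task success probability $p$ will be a straightforward counting computation (the probability that a uniformly random $A\subseteq[k]$ simultaneously separates $n$ given pairs), so the hypothesis of the lemma will be easy to verify in context.
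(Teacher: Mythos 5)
Your proof is correct and follows essentially the same argument as the paper's: the paper bounds the expected number of uncompleted tasks by $N(1-p)^m$ and invokes the first-moment method, which is precisely the union bound plus Markov in your phrasing. The remark about the implicit fixed distribution is a reasonable clarification of the lemma's hypotheses but does not change the substance of the argument.
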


\begin{proof}
  Given any particular task $\tau$, the probability that a randomly chosen object does not complete $\tau$ is at most $(1-p)$. So the probability that no object from a collection of $m$ many independently chosen objects completes $\tau$ is at most $(1-p)^m$. Thus the expected number of tasks left uncompleted by a collection of $m$ objects is $N(1-p)^m$. We are therefore looking for the least $m$ such that $N(1-p)^m < 1$, since in this case there exists at least one family of $m$ objects which completes all the tasks. Solving this last inequality for $m$ gives the desired inequality.
\end{proof}

In the next result we must calculate the value of $p$ to be used in the proof of Theorem~\ref{n-sep-upper}.

\begin{lemma}
  \label{n-sep-prob}
  Given a separable collection of $n$ many pairs, the probability that a randomly chosen subset of $[k]$ simultaneously separates the collection has lower bound $2^{-n}$.
\end{lemma}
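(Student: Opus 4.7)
The plan is to translate the separation problem into a question about proper $2$-colorings of a bipartite graph. Given the separable collection $b_1,\ldots,b_n$, I would form the graph $G$ with edge set $\{b_1,\ldots,b_n\}$ (we may assume the pairs are distinct, since duplicates only improve the bound). By the proposition above, since the collection is separable, $G$ is bipartite. The key observation is that a set $A\subseteq[k]$ separates a pair $\{x,y\}$ if and only if exactly one of $x,y$ lies in $A$, which is precisely the condition that the characteristic function $\chi_A$ restricted to $V(G)$ is a proper $2$-coloring of $G$. Hence $A$ simultaneously separates all the $b_i$ if and only if $\chi_A\restriction V(G)$ is such a proper $2$-coloring.

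Next, I would compute the probability directly. Since a uniformly random subset $A\subseteq[k]$ restricts to a uniformly random subset of $V=V(G)$ (elements outside $V$ have no bearing on separation), the probability in question equals the number of proper $2$-colorings of $G$ divided by $2^{|V|}$. A bipartite graph with $c$ connected components admits exactly $2^c$ proper $2$-colorings: on each component there are precisely two, differing by a swap of the two colors. Thus the desired probability equals $2^c/2^{|V|}=2^{c-|V|}$.

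Finally, I would invoke the standard graph-theoretic fact that for any graph with $n$ edges, $|V|$ vertices, and $c$ connected components, one has $|V|-c\leq n$. This is equivalent to the non-negativity of the cyclomatic number $n-|V|+c$, or to the observation that a spanning forest of $G$ uses $|V|-c$ edges, which cannot exceed the total edge count. It follows that $2^{c-|V|}\geq 2^{-n}$, as required. The only conceptual step is recognizing the correspondence between separators and proper $2$-colorings on the bipartite graph induced by the collection; once that is in place the rest is a routine count, and I do not anticipate any serious obstacles.
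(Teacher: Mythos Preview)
Your argument is correct and is a genuinely different route from the paper's. The paper proceeds by induction on $n$, showing $p_{n+1}\ge\frac12 p_n$ via a four-case analysis according to how the new pair $b_{n+1}$ meets $b_1\cup\cdots\cup b_n$ (disjoint, one shared vertex, both shared with a subcase on whether $b_{n+1}$ lies in one component or bridges two). You instead compute the probability \emph{exactly} as $2^{c-|V|}$ by identifying separators with proper $2$-colorings of the bipartite graph $G$, and then bound it below by $2^{-n}$ using the nonnegativity of the cyclomatic number $n-|V|+c$. Your approach is shorter, avoids all case analysis, and yields the sharp value of the probability rather than just the inequality; the paper's inductive proof, on the other hand, is more hands-on and makes explicit where the factor of $\frac12$ is lost at each step. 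The only small remark is that the reduction to distinct pairs is not even needed: since you form the \emph{edge set} $\{b_1,\ldots,b_n\}$, the number of edges is automatically at most $n$, and the bound $|V|-c\le n$ follows directly.
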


\begin{proof}
  Let $p_n$ be the greatest possible lower bound. We will show that $p_{n+1}\geq\frac12 p_n$, and the result follows using a simple induction. For this let $b_1,\ldots,b_{n+1}$ be a separable collection of pairs, and let $p$ be the probability that a set $A$ simultaneously separates $b_1,\ldots,b_{n+1}$. We will show that $p\geq\frac12p_n$ by dividing into several cases.

\emph{Case 1}: $b_{n+1}$ is disjoint from $b_1\cup\cdots\cup b_n$. Then the event that $A$ separates $b_{n+1}$ is independent of the event that $A$ separates $b_1,\ldots,b_n$. Since the probability that $b_{n+1}$ is separated is $\frac12$, we clearly have $p\geq\frac12p_n$.

\emph{Case 2}: $b_{n+1}$ shares one element with $b_1\cup\cdots\cup b_n$. Let $x$ denote the shared element and $y$ denote the other element of $b_{n+1}$. This time the event $A$ contains $y$ is independent of the event that $A$ separates $b_1,\ldots,b_n$. Meanwhile if $x\in A$ then $A$ separates $b_{n+1}$ if and only if $y\notin A$, and if $x\notin A$ then $A$ separates $b_{n+1}$ if and only if $y\in A$. Together this again implies $p\geq\frac12p_n$.

\emph{Case 3}: $b_{n+1}$ shares both elements with $b_1\cup\cdots\cup b_n$. Let $G$ be the graph with edges $b_1,\ldots,b_n$. If $b_{n+1}$ is contained in a single connected component of $G$ (Case 3a), then any separator of $b_1,\ldots,b_n$ will automatically separate $b_{n+1}$, giving $p\geq p_n$.

If $b_{n+1}$ is divided between two components of $G$ (Case 3b), then we find some $i\leq n$ such that removing $b_i$ doesn't disconnect any component of $G\cup\{b_{n+1}\}$. (Every graph either has a cycle or a leaf.) Now adding $b_i$ to the collection $(G \cup\{b_{n+1}\}\smallsetminus\{b_i\})$ yields one of the cases 1, 2, or 3a. Hence we can again conclude that $p\geq\frac12 p_n$.
\end{proof}


We are now prepared to conclude the proof of the upper bound for $n$-separating families.

\begin{proof}[Proof of Theorem~\ref{n-sep-upper}]
  With separating separable collections of pairs as our task, the number of tasks to be completed satisfies $N\leq\binom{k}{2}^n$. By Lemma~\ref{n-sep-prob}, the probability that a random set completes a given task satisfies $p\geq2^{-n}$. Substituting these estimates into the probabilistic bound of Lemma~\ref{prob-method}, we can find an $n$-separating family $\mathcal F$ which satisfies
  \begin{align*}
    |\mathcal F|&\leq\frac{\log(\binom{k}{2}^n)}{-\log(1-2^{-n})}+1\\
                &\leq \frac{2n\log k}{-\log(1-2^{-n})}+1\text{.}
  \end{align*}
  This implies the desired asymtotic bound.
\end{proof}

We now turn towards the task of establishing a lower bound on the size of an $n$-separating family. Since our lower bound will involve a comparison between the sizes of $n$-separating families and the previously studied $(i,j)$-separating families, we first introduce this latter notion and explore the relationship between the two.

\begin{definition}
A family $\mathcal{F} \subseteq \mathscr P[k]$ is \emph{$(i,j)$-separating} if for all $P,Q \subseteq [k]$ with $|P| \le i$, $|Q| \leq j$ and $P \cap Q = \emptyset$, there exists $A \in \mathcal{F}$ such that either $P \subseteq A$ and $A \cap Q = \emptyset$ or $Q \subseteq A$ and $A \cap P = \emptyset$.
\end{definition}

Before going further, we establish the full set of implications between the notions of $(i,j)$-separating and the notions of $n$-separating. Specifically we prove that all of the implications described in Figure~\ref{fig-ij-sep} hold, and that no other implications hold. This situation helps confirm that the notion of $n$-separating is interesting in its own right.


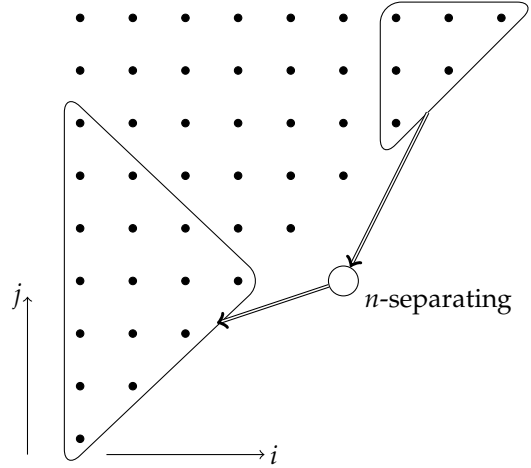
\begin{figure}[h]
\begin{center}
\begin{tikzpicture}[rounded corners=8pt, inner sep=2pt, scale=.7]
\foreach \i in {1, ..., 9}
  \foreach \j in {\i , ..., 9}
    \node [circle, fill, draw, inner sep=1pt] at (\i,\j){};
\draw (0.7,0.4) -- (4.5,4) --  (0.7,7.6) -- cycle;
\draw (6.7,6.3) --(9.7,9.3) -- (6.7,9.3) -- cycle;
\node[circle,draw,inner sep=4pt,label=350:$n$-separating] (n) at (6,4){};
\draw[->,double] (7.6,7.2) -- (n);
\draw[->,double] (n) -- (3.6,3.2);
\draw[->] (1.5,.7) -- (4.5,.7) node[right]{$i$};
\draw[->] (0,.7) -- (0,3.7) node[left]{$j$};
\end{tikzpicture}
\end{center}
\caption{Diagram of separability notions. A filled dot in coordinate $(i,j)$ represents the notion of $(i,j)$-separating. Implications between the filled dots go down and to the left. In this figure $n=7$.\label{fig-ij-sep}}
\end{figure}

We begin by establishing the implications that do hold.

\begin{lemma}
  \label{implications}
  \begin{enumerate}
  \item If $\mathcal F$ is $(i,j)$-separating then for every $i' \le i$ and $j' \le j$, $\mathcal F$ is $(i',j')$-separating.
  \item If $\mathcal F$ is $(n,n)$-separating then $\mathcal F$ is $n$-separating.
  \item If $\mathcal F$ is $(i+j-1)$-separating, then $\mathcal F$ is $(i,j)$-separating.
  \end{enumerate}
\end{lemma}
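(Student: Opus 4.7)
The plan is to handle the three implications in order, spending most effort on~(3).

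Part~(1) is immediate: any configuration $(P',Q')$ witnessing the $(i',j')$-separating hypothesis also satisfies $|P'|\le i$ and $|Q'|\le j$, so the $(i,j)$-separating assumption on $\mathcal F$ applies verbatim. For~(2), I would reduce to pairs and invoke the earlier bipartite-graph characterization of separability: given a separable collection of $n$ pairs $b_1,\ldots,b_n$, choose them so that $G=\{b_1,\ldots,b_n\}$ is bipartite, and let $V(G)=V_1\sqcup V_2$ be the bipartition. Since $G$ has $n$ edges, $|V_1|,|V_2|\le n$ and $V_1\cap V_2=\emptyset$, so applying the $(n,n)$-separating hypothesis to the pair $(V_1,V_2)$ yields $A\in\mathcal F$ with (say) $V_1\subseteq A$ and $A\cap V_2=\emptyset$, and any such $A$ separates every $b_i$.

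For~(3), I plan to exhibit, for any disjoint $P=\{p_1,\ldots,p_r\}$ and $Q=\{q_1,\ldots,q_s\}$ with $1\le r\le i$ and $1\le s\le j$, a separable collection of $i+j-1$ pairs whose simultaneous separators are exactly the witnesses of $(i,j)$-separation on $(P,Q)$. The construction is a ``double star'': take the pairs $\{p_1,q_t\}$ for $t=1,\ldots,s$ together with $\{p_u,q_1\}$ for $u=2,\ldots,r$, giving $r+s-1$ edges that form a connected bipartite graph on $P\cup Q$ with bipartition $(P,Q)$. If $r+s-1<i+j-1$, pad the collection with repetitions of a single edge; the collection remains separable, and any separator of the padded collection separates the original pairs. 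Because the graph is connected and bipartite, its only two $2$-colorings are the ones induced by $(P,Q)$, so any $A$ that simultaneously separates every pair of the collection must satisfy $P\subseteq A$ and $A\cap Q=\emptyset$, or the reverse. The $(i+j-1)$-separating hypothesis then produces such an $A$.

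There is no serious obstacle; the main content is the double-star construction in~(3). The degenerate cases $r=0$ or $s=0$ (should the definition admit them) reduce to easy existence statements about a single small set, which are handled trivially using the separating-family property of $\mathcal F$ inherited from $(i+j-1)$-separating via~(1) and~(2).
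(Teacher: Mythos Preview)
Your proposal is correct and follows essentially the same approach as the paper: part~(1) is immediate, part~(2) uses the bipartition of the graph formed by the $n$ pairs to produce the $(n,n)$-configuration, and part~(3) uses the identical ``double star'' construction (edges $\{p_1,q_t\}$ and $\{p_u,q_1\}$) to encode the $(P,Q)$-separation as a collection of pairs. The only cosmetic difference is that the paper takes $|P|=i$, $|Q|=j$ exactly (invoking part~(1) implicitly for smaller sizes), whereas you handle $r\le i$, $s\le j$ directly by padding with repeated edges; both are fine.
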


\begin{proof}
  (1) This is clear from the definition.

  (2) Let $\mathcal F$ be $(n,n)$-separating and let $b_1,\ldots,b_n$ be a separable collection of pairs. Then $b_1,\ldots,b_n$ form the edges of a bipartite graph with parts $P,P'$. Since $|P|,|P'|\leq n$, we can find $A\in\mathcal{F}$ which separates $P,P'$. Clearly $A$ also separates the collection $b_1,\ldots,b_n$, and so $\mathcal F$ is $n$-separating.

  (3) Let $\mathcal F$ be $(i+j-1)$-separating and let $P,Q$ be disjoint sets of sizes $i,j$. We can build a connected bipartite graph $G$ with parts $P$ and $Q$ using $i+j-1$ edges $b_1,\ldots,b_{i+j-1}$. (For this, place an edge from $\min P$ to each element in $Q$, and an edge from every element other of $P$ to $\min Q$.) Now if $A\in\mathcal F$ separates the pairs $b_1,\ldots,b_{i+j-1}$ then clearly $A$ also $(i,j)$ separates $P$ and $Q$, and so $\mathcal F$ is $(i,j)$-separating.
\end{proof}

\begin{theorem}
  \label{non-implications}
  The only implications between $(i,j)$-separating and $n$-separating notions are those established in Lemma~\ref{implications}.
\end{theorem}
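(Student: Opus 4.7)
The plan is to exhibit explicit counterexamples for each ``missing'' arrow in Figure~\ref{fig-ij-sep}. The workhorse is the family $\mathcal F_i := \{A \subseteq [k] : |A| \le i\}$ of all subsets of size at most $i$. Since $A = P$ itself lies in $\mathcal F_i$ whenever $|P| \le i$, this family is $(i, j)$-separating for every $j \ge 1$. On the other hand, no $A \in \mathcal F_i$ has size exceeding $i$, so $\mathcal F_i$ fails to be $(i', j')$-separating when $\min(i', j') > i$; and a matching of $n$ pairwise disjoint pairs (which is separable) forces any simultaneous separator to have size at least $n$, so $\mathcal F_i$ fails to be $n$-separating for $n > i$ (assuming $k \ge 2n$).

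Combining these facts with Lemma~\ref{implications}(1), the family $\mathcal F_{\min(i_0,j_0)}$ witnesses $(i_0, j_0)$-sep $\not\Rightarrow (i_1, j_1)$-sep whenever $\min(i_1, j_1) > \min(i_0, j_0)$, and likewise $(i_0, j_0)$-sep $\not\Rightarrow n$-sep whenever $n > \min(i_0, j_0)$. For the remaining incomparable case of the classical notions (after sorting so that $i_0 \le j_0$ and $i_1 \le j_1$, the case $i_0 \ge i_1$ with $j_0 < j_1$), I would use instead the uniform-size family $\binom{[k]}{m}$. A direct check shows $\binom{[k]}{m}$ is $(i, j)$-separating if and only if either $i \le m$ and $j \le k - m$, or $j \le m$ and $i \le k - m$; hence taking $k = i_0 + j_0$ and $m = i_0$ yields a family that is $(i_0, j_0)$-separating but not $(i_1, j_1)$-separating (for instance, $\binom{[5]}{2}$ is $(2, 3)$-separating but not $(1, 4)$-separating).

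The most delicate direction is $n$-sep $\not\Rightarrow (i, j)$-sep when $i + j \ge n + 2$, which constitutes the main obstacle. By Lemma~\ref{implications}(1) it suffices to treat the boundary pairs $i + j = n + 2$. For the corner case $(1, n+1)$, I would exhibit an $n$-separating family with a ``missing element'' and simultaneously a ``missing large subset''. For instance, the $2$-separating family of Example~\ref{2-sep-example} has no member containing the element $8$ and no member containing the triple $\{1, 2, 7\}$, so $(1, 3)$-separation fails for $P = \{8\}$, $Q = \{1, 2, 7\}$. Analogous families, obtained by iterating Theorem~\ref{2-sep-construction} on a minimal separating family and controlling which element is omitted, should provide counterexamples for every boundary pair $(i, j)$ with $i + j = n + 2$; the theorem then follows by combining all these counterexamples with Lemma~\ref{implications}(1), and the only remaining work is to verify each boundary case by direct inspection.
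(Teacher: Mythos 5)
Your unified approach using the down-closed family $\mathcal F_i = \{A : |A|\le i\}$ and the uniform family $\binom{[k]}{m}$ is cleaner and more systematic than the paper's collection of ad hoc examples for the directions ``$(i_0,j_0)$-sep $\not\Rightarrow(i_1,j_1)$-sep'' and ``$(i_0,j_0)$-sep $\not\Rightarrow n$-sep'' --- those parts are correct and, I think, an improvement in exposition. (Minor caution: your formula for when $\binom{[k]}{m}$ is $(i,j)$-separating needs to be interpreted as a claim about $P,Q$ with $|P|+|Q|\le k$, but with $k=i_0+j_0$ it does produce the failure you want by taking $|P|=i_0-1$, $|Q|=j_0+1$.)

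The genuine gap is exactly where you flag it: ``$n$-sep $\not\Rightarrow(i,j)$-sep for $i+j\ge n+2$.'' Your single worked example ($n=2$, $(i,j)=(1,3)$ via Example~\ref{2-sep-example}) does not extend by the route you sketch. Theorem~\ref{2-sep-construction} is a one-step result producing $2$-separating families from separating families; the paper establishes no analogous ``$n\Rightarrow n+1$'' step, so ``iterating'' it has no foundation, and even if the iterate happened to be $n$-separating you would have no control over which $(i,j)$-separating properties it lacks. The paper resolves this case with a completely different device: fix disjoint $B,B'$ with $|B|=i$, $|B'|=j$ and take
\[
\mathcal F=\mathscr P[k]\smallsetminus\{C : C\supseteq B \text{ or } C\supseteq B'\}.
\]
Non-$(i,j)$-separation is immediate (no set contains $B$ or $B'$). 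The nontrivial point is $n$-separation: given separable pairs $b_1,\ldots,b_n$ forming a bipartite graph $G$ with parts $P,P'$, if both $P\supseteq B$ and $P'\supseteq B'$ then $i+j\ge n+2$ forces $B\cup B'$ to meet more than one connected component of $G$, and swapping one component $H$ (replacing $P,P'$ by $P\triangle H,P'\triangle H$) yields a valid separator that no longer contains $B$ or $B'$. This component-swapping argument is the missing idea; without it (or something equivalent) the theorem is not proved, since, as you note, all other boundary non-implications (including $n$-sep $\not\Rightarrow(n+1)$-sep) reduce to this case via Lemma~\ref{implications}.
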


\begin{proof}
  We give a series of counterexamples to the remaining implications.


  \begin{itemize}
  \item There exists an $n$-separating family which is not $(n,n)$-separating.
  \end{itemize}
  
  Fix disjoint sets $B,B'\subseteq[k]$ with $|B| = |B'| = n$, and let $\mathcal{F} = \bigcup_{i=1}^{n}[k]^i\smallsetminus\{B,B'\}$. Then the pair $B,B'$ witnesses that $\mathcal F$ is not $(n,n)$-separating.

  To see that $\mathcal F$ is $n$-separating, let $b_1,\ldots,b_n$ be a separable collection of pairs. Then these pairs make up the edges of a bipartite graph with two parts $P,P'$. If either $|P|<n$ or $|P'|<n$, then letting $A=$ this set, we have that $A\in\mathcal F$ and $A$ separates the collection $b_1,\ldots,b_n$.

  On the other hand, if $|P|=|P'|=n$ then the pairs $b_1,\ldots,b_n$ must be pairwise disjoint. In this case, if some $b_{i_0}$ contains an element $x$ which is not in $B\cup B'$, then we can choose any sequence $x_i\in b_i$ with $x_{i_0}=x$ and the set $A=\{x_1,\ldots,x_n\}$ lies in $\mathcal F$ and separates the collection $b_1,\ldots,b_n$. If some $b_{i_0}\subseteq B$ or $B'$, then any selection of $x_i\in b_i$ will yield a set $A=\{x_1,\ldots,x_n\}$ which separates $b_1,\ldots,b_n$. Finally if every $b_i$ meets both $B$ and $B'$, write $b_1 = \{x,y\}$, where $x\in B$ and $y\in B'$, and observe that $A=(B \smallsetminus \{x\}) \cup \{y\}$ lies in $\mathcal F$ and separates $b_1,\ldots,b_n$.

  \begin{itemize}
  \item If $i+j\ge n+2$, then there exists an $n$-separating family which is not $(i,j)$-separating.
  \end{itemize}
  
  Fix disjoint sets $B,B' \subseteq [k]$ with $|A|=i$ and $|B|=j$, and let $\mathcal{F} = \mathscr{P}[k]\smallsetminus\{C \mid C \supseteq B\text{ or }B'\}$. Then $\mathcal F$ is not $(i,j)$-separating since no element of $\mathcal{F}$ contains either $B$ or $B'$.

  To see that $\mathcal F$ is $n$-separating, let $b_1,\ldots,b_n$ be a separable collection of pairs. Again these pairs make up the edges of a bipartite graph $G$ with parts $P,P'$. If either $P$ or $P'$ lies in $\mathcal F$, then we are done. Otherwise, we can suppose that $P\supseteq B$ and $P'\supseteq B'$. Since $i+j\ge n+2$, the set $B\cup B'$ cannot lie in a single connected component of $G$. Letting $H$ be a component of $G$ which meets $B\cup B'$, we can view $G$ as a bipartite graph with parts $P\triangle H$ and $P'\triangle H$. Then at least one of these sets lies in $\mathcal F$, and it separates $b_1,\ldots,b_n$.

  \begin{itemize}
  \item There exists an $(n-1,j)$-separating family which is not $n$-separating.
  \end{itemize}

  An example is $\mathcal{F} = [k]^{n-1}$.

  \begin{itemize}
  \item There exists an $n$-separating family which is not $(n+1)$-separating.
  \end{itemize}

  An example is $\mathcal F = [k]^n$.

  \begin{itemize}
  \item Let $i \le j$. There exists an $(i,j)$-separating family which is not $(i+1,j)$-separating, and there exists an $(i,j)$-separating family which is not $(i,j+1)$-separating.
  \end{itemize}

  The family $\mathcal F=[k]^i$ is $(i,j)$-separating and not $(i+1,j)$-separating. And the family $\mathcal G=[k]^{k-j}$ is $(i,j)$-separating and not $(i, j+1)$-separating.

  \begin{itemize}
  \item Let $i < i' \leq j' < j$. Then there exists an $(i,j)$-separating family which is not $(i',j')$-separating, and there exists an $(i',j')$-separating family which is not $(i,j)$-separating.
  \end{itemize}
  
  For the first statement an example is given by $\mathcal{F} = [k]^i$.

  For the second statement fix $B\subseteq[k]$ with $|B|=i$ and set $\mathcal{G} = ([k]^{i'}\smallsetminus\{C \mid C \supseteq B\}) \cup [k\smallsetminus B]^{j'}$. Now let disjoint sets $P,Q\subseteq[k]$ with $|P| \leq i'$, $|Q| \leq j'$ be given. If $B \subseteq P$, then there is $A \in \mathcal{G}$ such that $Q \subseteq A$ and so $A$ separates $P,Q$. If $B \not \subseteq P$, then there is $A\in \mathcal{G}$ such that $P \subseteq A$ and again $A$ separates $P,Q$, so $\mathcal{G}$ is $(i',j')$-separating.

  On the other hand, fix any $B'$ be such that $|B'| = j$ and $B \cap B' = \emptyset$. Since no set in $\mathcal{G}$ contains $B$, any set in $\mathcal{G}$ that would separate $B,B'$ must contain $B'$. This is not possible since the sets in $\mathcal{G}$ have cardinality at most $j'$ and $j' < j$. Thus $\mathcal{G}$ is not $(i,j)$-separating.

 This concludes the proof of Theorem~\ref{non-implications}.
\end{proof}

The implications established above can be used to translate the bounds on $(i,j)$-separating families given in \cite{fredman-komlos} into bounds on $n$-separating families. The upper bound obtained in this way is not as tight as the upper bound already given in Theorem~\ref{n-sep-upper}. On the other hand, the lower bound obtained this way is the following.

\begin{theorem}
  The minimum size of an $n$-separating family has lower bound $\Omega(2^n\log k)$.
\end{theorem}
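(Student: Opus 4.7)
The plan is to derive the lower bound from the Fredman--Komlós lower bound on $(i,j)$-separating families via the implications established in Lemma~\ref{implications}.

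The first step is the elementary observation that an $n$-separating family is automatically $m$-separating for every $m \leq n$: given a separable collection of $m$ pairs, we can pad it out to a collection of $n$ pairs by repeating one of the pairs $n-m$ times; the padded collection is still separable, and any $A \in \mathcal F$ separating the padded collection separates the original. Combined with Lemma~\ref{implications}(3), this shows that every $n$-separating family on $[k]$ is $(i,j)$-separating whenever $i+j-1 \leq n$.

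The second step is to choose $i,j$ saturating the constraint $i+j-1 = n$ in a balanced way and invoke the Fredman--Komlós lower bound for $(i,j)$-separating families. Taking $i = \lfloor (n+1)/2 \rfloor$ and $j = \lceil (n+1)/2 \rceil$, the binomial coefficient $\binom{i+j}{i}$ becomes the central binomial coefficient of order $n+1$, which is $\Theta(2^n/\sqrt{n})$. The Fredman--Komlós lower bound for the minimum size of an $(i,j)$-separating family on $[k]$ grows essentially as $\binom{i+j}{i} \log k$ up to lower-order factors in $i+j$, and substituting the balanced choice yields the asserted lower bound of $\Omega(2^n \log k)$, with any sub-exponential factors in $n$ absorbed into the implicit constant (consistent with the paper's assertion that the lower and upper bounds match up to a linear factor in $n$).

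The main obstacle is invoking the Fredman--Komlós bound itself, which is established by an information-theoretic argument that simultaneously controls the exponential dependence on $i+j$ and the logarithmic dependence on $k$; without that cited bound one would recover only a $\log k$ bound from the classical separating-family argument, or a $2^n$-type bound from the Hamming-code observation in Theorem~\ref{n-sep-hamming} (via e.g.\ the Plotkin bound), but not their product. Once Fredman--Komlós is in hand, however, the reduction via the implication chain is completely routine, and no further combinatorial work is required.
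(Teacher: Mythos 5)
Your proof takes essentially the same approach as the paper: both reduce to the $(i,j)$-separating case via Lemma~\ref{implications}(3) with balanced $i,j$ satisfying $i+j\approx n$, and then invoke the Fredman--Komlós lower bound (Theorem~3 of \cite{fredman-komlos}). You are slightly more careful than the paper in spelling out the preliminary observation that $n$-separating implies $m$-separating for all $m\le n$ (which the paper uses implicitly, since Lemma~\ref{implications}(3) alone only gives that $(n-1)$-separating implies $(n/2,n/2)$-separating), but otherwise the reduction and the appeal to the cited bound are identical.
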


\begin{proof}
  By Lemma~\ref{implications}, every $n$-separating family is an $(n/2,n/2)$-separating family. And by Theorem~3 of \cite{fredman-komlos}, the minimum size of an $(n/2,n/2)$-separating family has lower bound $\Omega(2^n\log k)$, as desired.
\end{proof}

\section{$n$-splitting families and splittability}
\label{n-split-sec}

In this section we turn to splitting families and the new concept of $n$-splitting families. We begin with a simple upper bound on the minimum size of a splitting family. The result does not easily generalize to $n$-splitting families, so instead we work to generalize the probabilistic method of the previous section, eventually providing an upper bound on the minimum size of a $2$-splitting family. Meanwhile we use the ``volume method'' to find lower bounds on the minimum size of a splitting family. With the help of a partial characterization of $n$-splittable collections, this method is generalized to apply to $2$- and $3$-splitting families as well. We conclude with conjectures regarding bounds on $n$-splitting families when $n$ is larger than $2$ or $3$.

\begin{definition}
  A family $\mathcal F$ of subsets of $[k]$ is a \emph{splitting family} if for all $B\subset[k]$ there exists $A\in\mathcal F$ such that $|A\cap B|=\lfloor|B|/2\rfloor$ or $\lceil|B|/2\rceil$
\end{definition}

When either of the latter two conditions holds, we say that $A$ \emph{splits} $B$. In the definition of splitting, some authors require $|A\cap B|=\floor{|B|/2}$ (see for example \cite{roh-hahn}). We prefer our definition since it is technically convenient and equally useful in applications.

\begin{theorem}
  The minimum size of a splitting family of subsets of $[k]$ has upper bound $\lceil k/2\rceil$. 
\end{theorem}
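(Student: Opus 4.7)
The plan is to construct, for each $k$, a splitting family of size $\ceiling{k/2}$ using cyclic intervals, together with a reduction from the odd case to the even case.

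For $k = 2m$ even, working modulo $k$, define $A_i = \{i, i+1, \ldots, i+m-1\}$ for $i = 1, \ldots, k$. Given any $B \subseteq [k]$ with $|B| = b$, consider $f(i) = |A_i \cap B|$. A direct count gives $\sum_{i=1}^k f(i) = m \cdot b$, so the cyclic average of $f$ equals $b/2$; hence $\min f \le b/2 \le \max f$. Since $|f(i+1) - f(i)| \le 1$, a discrete intermediate value argument on the cycle shows that $f$ takes the value $\floor{b/2}$ or $\ceiling{b/2}$ at some $i^* \in [k]$, so the full cyclic family $\{A_1, \ldots, A_k\}$ already splits every $B$. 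The next step is to reduce to $m = \ceiling{k/2}$ sets using the observation that $A_{i+m}$ is the complement of $A_i$ in $[k]$ (the two cyclic intervals are disjoint and together cover $[k]$) together with the fact that $A$ splits $B$ if and only if $A^c$ splits $B$. Thus if $i^* \in \{m+1, \ldots, k\}$, then $A_{i^*-m}$ is the complement of $A_{i^*}$ and lies in $\{A_1, \ldots, A_m\}$, so this reduced family still splits every $B$.

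For the odd case $k = 2m+1$, note that $\ceiling{k/2} = \ceiling{(k+1)/2} = m+1$, so the even case yields a splitting family $\mathcal G$ on $[k+1]$ of size $m+1$. Setting $\mathcal F = \{A \cap [k] : A \in \mathcal G\}$ gives a family of cardinality at most $m+1$; for any $B \subseteq [k]$, some $A \in \mathcal G$ splits $B$ inside $[k+1]$, and since $B \subseteq [k]$ we have $(A \cap [k]) \cap B = A \cap B$, so $A \cap [k]$ splits $B$ as required.

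The main obstacle is securing the factor-of-two reduction in the even case, which hinges on the crucial structural fact that a cyclic shift by $m$ acts as complementation on our intervals; without this symmetry the cyclic IVT argument alone would give only a bound of $k$ rather than $k/2$. The odd case is then handled essentially for free by the restriction trick.
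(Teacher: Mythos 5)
Your construction is essentially the same family the paper uses (cyclic intervals of length $\ceiling{k/2}$), and in the even case your family $\{A_1,\ldots,A_m\}$ coincides with the paper's $\mathcal F$, but the argument that it splits everything is set up differently. The paper works directly with the $\ceiling{k/2}$ intervals for both parities at once: it considers the signed quantity $f(i)=|A_i\cap B|-|A_i^c\cap B|$, observes that $f(i)-f(i+1)\in\{-2,0,2\}$ because $A_i\triangle A_{i+1}$ has only two elements, and then exploits the fact that $A_1$ and $A_{\ceiling{k/2}}$ are near-complements to show $f(1)+f(\ceiling{k/2})\in\{-2,0,2\}$; the sequence therefore travels from $f(1)$ to approximately $-f(1)$ in steps of size at most $2$ and must pass through $\{-1,0,1\}$. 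You instead run the discrete IVT on all $k$ cyclic intervals using the averaging identity $\sum_{i=1}^k|A_i\cap B|=m\cdot b$ as the trigger, and then recover the factor-of-two savings from the exact complementarity $A_{i+m}=A_i^c$, handling odd $k$ by a separate restriction from $[k+1]$. Both arguments are discrete intermediate-value arguments on the same underlying family; yours is cleaner in motivating \emph{why} a good index must exist (the average is literally $b/2$), while the paper's antisymmetry observation avoids the detour through the full cycle and the odd/even case split, so both parities are dispatched in one pass. Your proof is correct.
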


The construction below is attributed to Coppersmith in \cite{stinson-baby}; our statement and corresponding argument are slightly more general than the one found there.

\begin{proof}
For each $i$ we define $A_i=\{i,\ldots,i+\ceiling{k/2}-1\}$. Letting $\mathcal F=\{A_i\mid 1\leq i\leq\ceiling{k/2}\}$, we claim that $\mathcal F$ is a splitting family. So let $B\subseteq[k]$ be given and define the function $f(i)=|A_i\cap B|-|A_i^c\cap B|$. We seek $i$ such that $f(i) \in \{-1,0,1\}$, since this implies that $A_i$ splits $B$.

To see there is such an $i$, we first claim that $f(i)-f(i+1)\in\{-2,0,2\}$. For this note that $A_i\triangle A_{i+1}=\{i,i+\ceiling{k/2}\}$. If both or neither of these two points lie in $B$ then $f(i+1)=f(i)$, and if exactly one of these two points lies in $B$ then $f(i+1)=f(i)\pm2$.

We can use similar reasoning to conclude that $f(1)+f(\ceiling{k/2})\in\{-2,0,2\}$. This time $A_1^c\triangle A_{\ceiling{k/2}}^c=\{\ceiling{k/2},k\}$ when $k$ is even, and $=\{\ceiling{k/2}\}$ when $k$ is odd. Once again if zero or two of these points lie in $B$ then $f(1)+f(\ceiling{k/2})=0$, and if exactly one of these points lies in $B$ then $f(1)+f(\ceiling{k/2})=\pm2$.

In sum, the sequence $f(1),\ldots,f(\ceiling{k/2})$ begins at $f(1)$, has step sizes at most $2$, and ends at either $-f(1)$ or $-f(1)\pm2$. It follows from this that there exists $i$ such that $f(i)\in\{-1,0,1\}$, as desired.
\end{proof}

We conjecture the upper bound given above is sharp, however at the moment we can only establish a lower bound of $\Omega(\sqrt{k})$. In order to obtain this estimate, we will use the \emph{volume method} for computing lower bounds. This method was used together with more advanced techniques in \cite{fredman-komlos} to obtain lower bounds for $(i,j)$-separating families. The next result describes the volume method in terms of objects and tasks, as was done in Lemma~\ref{prob-method} for the probabilistic method.

\begin{lemma}
  \label{lem:vol-method}
  Suppose that there is a set of $N$ tasks to be completed, and that each object completes at most $v$ of the tasks. Then if $\mathcal F$ is a family of objects which together complete all the tasks, we have
\[ |\mathcal F| \ge N/v\text{.}
\]
\end{lemma}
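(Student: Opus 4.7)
The plan is to prove this by a simple double-counting (or pigeonhole) argument on the set of ``completion pairs.''

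First, I would define the bipartite incidence set
\[
  I = \{(T,O) : T \text{ is a task}, O \in \mathcal F, \text{ and } O \text{ completes } T\},
\]
and bound $|I|$ from both sides. On one hand, since $\mathcal F$ completes every one of the $N$ tasks, each task $T$ contributes at least one pair to $I$; hence $|I| \ge N$. On the other hand, each object $O \in \mathcal F$ completes at most $v$ tasks, so each $O$ contributes at most $v$ pairs to $I$; hence $|I| \le v \cdot |\mathcal F|$.

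Combining these two inequalities yields $v \cdot |\mathcal F| \ge N$, i.e.\ $|\mathcal F| \ge N/v$, as claimed. There is no real obstacle here — the statement is essentially a pigeonhole observation — so the only thing to be careful about is making the counting clean and noting that the hypothesis ``each object completes at most $v$ tasks'' is a uniform upper bound that makes the second inequality immediate.
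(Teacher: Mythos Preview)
Your argument is correct and essentially identical to the paper's: the paper simply notes that $m$ objects together complete at most $mv$ tasks, so $mv \ge N$, which is exactly your double-counting bound phrased without the incidence set. There is nothing to add.
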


The proof of this lemma is trivial: A collection of $m$ many objects completes at most $mv$ many tasks, so a family of objects completing all the tasks must have size at least $N/v$. We use the variable $v$ because the number of tasks completed by a given object is called the \emph{volume} of that object.

We now apply the volume method to find our lower bound on the size of a splitting family.

\begin{theorem}
  \label{splitting-lower}
  The minimum size of a splitting family of subsets of $[k]$ is $\Omega(\sqrt{k})$.
\end{theorem}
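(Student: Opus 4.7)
The plan is to apply the volume method of Lemma~\ref{lem:vol-method} to a carefully chosen subfamily of the tasks. Rather than asking $\mathcal F$ to split every $B\subseteq[k]$, I would restrict attention to those $B$ of a single fixed even cardinality $2m$, where I will take $m=\floor{k/4}$. For such $B$, the splitting condition reduces to $|A\cap B|=m$, and the number of tasks is simply $N=\binom{k}{2m}$.

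For the volume bound, fix $A\subseteq[k]$ with $|A|=a$ and count the $B$ of size $2m$ that $A$ splits. Such a $B$ is precisely the disjoint union of an $m$-subset of $A$ with an $m$-subset of $A^c$, giving $\binom{a}{m}\binom{k-a}{m}$ many. A short computation with the ratio of consecutive values shows that, as a function of $a$, this quantity is unimodal and maximized when $a$ is as close as possible to $k/2$, yielding the per-object volume bound $v\leq\binom{\floor{k/2}}{m}\binom{\ceiling{k/2}}{m}$.

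Applying Lemma~\ref{lem:vol-method}, any splitting family $\mathcal F$ would then satisfy
\[
  |\mathcal F|\;\geq\;\frac{\binom{k}{2m}}{\binom{\floor{k/2}}{m}\binom{\ceiling{k/2}}{m}}\text{,}
\]
and it remains to verify that this ratio grows like $\sqrt{k}$. The case $k=4n$ is most transparent: Stirling's formula gives $\binom{4n}{2n}\sim 2^{4n}/\sqrt{\pi n}$ for the numerator and $\binom{2n}{n}^2\sim 2^{4n}/(\pi n)$ for the denominator, so the ratio is asymptotic to a constant multiple of $\sqrt{k}$. The remaining residue classes of $k\bmod 4$ introduce only a bounded multiplicative correction. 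The main (if minor) obstacle is this final asymptotic bookkeeping together with the tidy verification of the unimodality claim; the conceptual heart of the argument, however, is the choice of a balanced task size $|B|\approx k/2$, since it is at that scale that the central binomial coefficients open up the $\sqrt{k}$ gap between the task count and the per-object volume.
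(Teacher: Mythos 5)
Your argument is correct, and it is a tidy variation on the paper's own volume-method proof. The paper counts all tasks at once: every $B\subseteq[k]$ is a task, so $N=2^k$, and bounding the volume of a balanced splitter requires splitting a three-part sum and invoking the Vandermonde identity $\sum_i\binom{n}{i}^2=\binom{2n}{n}$ before the Stirling estimate. You instead throw away most of the tasks and keep only the $B$ of a single fixed even size $2m\approx k/2$, which reduces the volume count for a set $A$ of size $a$ to the single product $\binom{a}{m}\binom{k-a}{m}$; the unimodality in $a$ is a one-line ratio check (the ratio of consecutive values is $\ge1$ iff $a\le(k-1)/2$), and the Stirling bookkeeping then gives the same $\Theta(\sqrt{k})$ gap. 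This is allowed because the volume method only needs some collection of tasks, and restricting to fewer tasks can only weaken the lower bound, not invalidate it. What your version buys is cleaner combinatorics (no sums, no Vandermonde, no ``splitting even-sized sets is the hardest'' side observation); what the paper's version emphasizes is that the same bound is obtained even without discarding any tasks. The only small points you left implicit, both routine, are the unimodality verification and the fact that the formula $\binom{a}{m}\binom{k-a}{m}$ holds for all $a$ (it is zero when $a<m$ or $k-a<m$, so no extra case analysis is needed); neither presents any difficulty.
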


\begin{proof}
  Since the minimum size of a splitting family is monotone in $k$, we may suppose that $k$ is even for the purpose of asymptotics. In this case, it is not difficult to see that the splitters of maximum volume are of size $k/2$. To see this is true of splitting even-sized sets $B$, one may simply compute that $\binom{k/2}{t/2}^2\geq\binom{k/2+j}{t/2}\binom{k/2-j}{t/2}$. And to see it is true in general, note that if a set splits the maximum number of even-sized sets, then it splits the maximum number of sets.


  Now, if $A$ is a splitter of size $k/2$, then the volume $v$ of $A$ is given by
  \[v=\sum_i\binom{k/2}{i}\binom{k/2}{i}
  +\sum_i\binom{k/2}{i}\binom{k/2}{i+1}
  +\sum_i\binom{k/2}{i+1}\binom{k/2}{i}\text{.}
  \]
  Note that the latter two terms are no larger than the first since the identity $2AB\leq A^2+B^2$ implies
  \[2\sum_i\binom{k/2}{i}\binom{k/2}{i+1}
  \leq\sum_i\binom{k/2}{i}^2+\sum_i\binom{k/2}{i+1}^2
  = 2\sum_i\binom{k/2}{i}^2\text{.}
  \]
  It follows that the volume has upper bound
  \[v\leq 3\sum_i\binom{k/2}{i}^2=3\binom{k}{k/2}\text,
  \]
  where the last equality is a well-known identity. Applying the standard Stirling-type approximation that $\binom{k}{k/2}\sim 2^k/\sqrt{k}$, we conclude that $v$ is $O(2^k/\sqrt{k})$. Meanwhile, the number of sets to be split is $N=2^k$, and so the volume lower bound of $N/v$ is $\Omega(\sqrt{k})$, as desired.
\end{proof}

We next discuss the generalization of splitting families which we have called $n$-splitting families. The definition is analogous to that of $n$-separating families.

\begin{definition}
  A collection $B_1,\ldots B_n$ of subsets of $[k]$ is \emph{splittable} if there exists $A\subseteq[k]$ which simultaneously splits all the $B_i$.
\end{definition}

\begin{definition}
  \label{n-splitting}
  A family $\mathcal F$ of subsets of $[k]$ is \emph{$n$-splitting} if for every splittable collection $B_1,\ldots,B_n\subseteq[k]$ there exists $A\in\mathcal F$ which simultaneously splits all the $B_i$.
\end{definition}

We remark that every collection consisting of just two sets $B_1,B_2$ is splittable. To see this, we can simply choose $D\subset B_1\cap B_2$ of size $|D|=\ceiling{|B_1 \cap B_2|/2}$, choose $E\subset B_1\smallsetminus B_2$ of size $|E| = \floor{|B_1 \smallsetminus B_2|/2}$, and choose $F\subset B_2 \smallsetminus B_1$ of size $|F|=\floor{|B_2 \smallsetminus B_1|/2}$. Then it is easy to see that $A=D\cup E\cup F$ is a splitter for both $B_1$ and $B_2$. This fact together with the method of Theorem~\ref{splitting-lower} gives the following lower bound on the size of $2$-splitting families.

\begin{theorem}
  \label{2-splitting-lower}
  The minimum size of a $2$-splitting family of subsets of $[k]$ is $\Omega(k)$.
\end{theorem}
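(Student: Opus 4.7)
The plan is to apply the volume method of Lemma~\ref{lem:vol-method}, in direct analogy with the proof of Theorem~\ref{splitting-lower}, but taking the ``tasks'' to be the splittable ordered pairs of subsets of $[k]$. The first observation is that, as noted in the paragraph immediately preceding the theorem, every pair $(B_1,B_2)$ of subsets of $[k]$ is splittable. Hence the number of tasks is exactly $N = 4^k$.

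The second step is to estimate the volume $v(A)$ of a candidate $A\subseteq[k]$, that is, the number of pairs $(B_1,B_2)$ that $A$ simultaneously splits. Since the condition ``$A$ splits $B_1$'' depends only on $B_1$ and the condition ``$A$ splits $B_2$'' depends only on $B_2$, these events factor and one obtains
\[ v(A) \;=\; w(A)^2, \]
where $w(A)$ denotes the number of $B\subseteq[k]$ that $A$ splits in the ordinary sense. From the calculation inside the proof of Theorem~\ref{splitting-lower}, any $A$ satisfies $w(A)\leq 3\binom{k}{k/2} = O(2^k/\sqrt{k})$, so squaring yields $v(A)\leq O(4^k/k)$.

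Applying Lemma~\ref{lem:vol-method}, any $2$-splitting family $\mathcal F$ then satisfies
\[ |\mathcal F|\;\geq\;\frac{N}{\max_A v(A)}\;\geq\;\frac{4^k}{O(4^k/k)}\;=\;\Omega(k), \]
which is the desired bound. There is essentially no hard step in this argument; the only substantive point is the product structure of the splitting condition across the two coordinates $B_1,B_2$, which lets us square the single-set volume bound and divide into the squared count of tasks. By the same token, one anticipates that the analogous bound for $n$-splitting families will require a genuine understanding of which $n$-tuples are splittable (so that $N$ can be estimated) before the volume method can be pushed to higher $n$ — which matches the remark in the introduction that this characterization is available only through $n\leq 3$.
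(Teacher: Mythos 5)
Your argument is correct and is essentially the same as the paper's: the paper also applies the volume bound with $N=(2^k)^2$ ordered pairs of subsets and bounds the maximal volume by $9\bigl(\sum_i\binom{k/2}{i}^2\bigr)^2=\bigl(3\binom{k}{k/2}\bigr)^2=O\bigl((2^k/\sqrt{k})^2\bigr)$. The only difference is cosmetic: where the paper says ``by a straightforward computation the splitters of maximum volume satisfy $v\leq 9(\cdots)^2$,'' you make explicit the underlying reason, namely that $v(A)=w(A)^2$ since the events ``$A$ splits $B_1$'' and ``$A$ splits $B_2$'' factor across the two coordinates; this is a small clarification, not a different approach.
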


\begin{proof}
  The calculation is similar to that of Theorem~\ref{splitting-lower}. This time we say that the volume of $A\subseteq[k]$ is the number of (ordered) collections $B_1,B_2$ such that $A$ simultaneously splits $B_1,B_2$. Once again we assume that $k$ is even and note that the $2$-splitters of maximum volume are of size $k/2$. Then by a straightforward computation the splitters of maximum volume satisfy
\[v\leq 9\left(\sum_i\binom{k/2}{i}\binom{k/2}{i}\right)^2
\]
The number of ordered collections $B_1,B_2$ is $(2^k)^2$, so using the same Stirling approximation as before we obtain a bound of
\[N/v=\Omega\left( (2^k)^2/(2^k/\sqrt{k})^2 \right)\text{.}
\]
The latter expression is $\Omega(k)$, as desired.
\end{proof}

The technique of splitting each sector of the Venn diagram of the $B_i$ separately (described after Definition~\ref{n-splitting}) does not work in general for collections of three or more sets. For example, the collection $B_1=\{1,2\}$, $B_2=\{2,3\}$, $B_3=\{3,1\}$ is not splittable at all. The next result essentially states that for collections of size three, this example is the only obstacle to splittability.

\begin{lemma}
  \label{3-sets-splittable}
  The collection $A,B,C$ is not splittable if and only if $|A\cap B \cap C^c|$, $|A\cap B^c \cap C|$, and $|A^c\cap B \cap C|$ are all odd, and there are no other elements in $A \cup B \cup C$ besides those in these three sets.
\end{lemma}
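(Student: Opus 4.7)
The plan is to analyze both directions using the partition of $A \cup B \cup C$ into its seven sectors indexed by membership in $A$, $B$, $C$. For a candidate splitter $S$ and a sector $\sigma$, let $x_\sigma = |\sigma|$ and define the deviation $\delta_\sigma = 2|S \cap \sigma| - x_\sigma$, which takes integer values in $[-x_\sigma, x_\sigma]$ of the same parity as $x_\sigma$. The condition that $S$ splits $A$ is equivalent to
\[
\delta_A := \delta_{A\cap B\cap C} + \delta_{A\cap B\cap C^c} + \delta_{A\cap B^c\cap C} + \delta_{A\cap B^c\cap C^c} \in \{-1,0,1\},
\]
and analogously for $\delta_B$ and $\delta_C$.

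First I would prove that the stated condition forces non-splittability. If $|A\cap B\cap C^c|$, $|A\cap B^c\cap C|$, $|A^c\cap B\cap C|$ are odd and the other four sectors are empty, then each of $|A|$, $|B|$, $|C|$ is a sum of two odd sector sizes and hence even, so any splitter must achieve $\delta_A = \delta_B = \delta_C = 0$. Since the four empty sectors contribute $0$ to every $\delta$, adding the three constraints yields $2(\delta_{A\cap B\cap C^c} + \delta_{A\cap B^c\cap C} + \delta_{A^c\cap B\cap C}) = 0$. Each summand is odd (matching its odd-sized sector), and three odd numbers cannot sum to zero---contradiction.

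For the converse I argue the contrapositive and construct a splitter. The main scheme is to set $\delta_\sigma = 0$ on each even sector and $\delta_\sigma \in \{\pm 1\}$ on each odd sector, choosing signs to meet the three constraints. The argument splits into case (a), where at least one of $|A\cap B\cap C^c|$, $|A\cap B^c\cap C|$, $|A^c\cap B\cap C|$ is even, and case (b), where all three doubly-covered sectors are odd but at least one of $A\cap B\cap C$, $A\setminus(B\cup C)$, $B\setminus(A\cup C)$, $C\setminus(A\cup B)$ is nonempty. In case (a), setting the even double's $\delta$ to $0$ removes the parity obstruction from the necessity argument, and the remaining signs can be oriented directly. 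In case (b), the nonempty extra sector supplies an additional $\delta$ term that breaks the deadlock forcing the three double-$\delta$'s to simultaneously vanish; the choice of signs is organized by which extra sector is available.

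I expect the main obstacle will arise in subcase (b) when the only nonempty extra sector has even size at least $2$, so that its $\delta$ must stay even. In that situation the simple $\{0, \pm 1\}$ scheme fails, and one must take $\delta_\sigma = \pm 2$ on that extra sector together with $\delta = -1$ on each of the three doubles to obtain $\delta_A = \delta_B = \delta_C = 0$. Verifying each of the sub-cases of (a) and (b), based on the parities and emptiness of the remaining sectors, then amounts to a finite and largely mechanical check.
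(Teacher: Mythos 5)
Your $\delta$-parameterization is the same Venn-sector analysis that the paper uses, recast in cleaner algebraic notation, and your case structure (parities of the three ``double'' cells, plus the role of the remaining cells) matches the paper's. Where you genuinely improve on the paper is the necessity direction: summing the three constraints $\delta_A = \delta_B = \delta_C = 0$ gives $2(\delta_{AB} + \delta_{AC} + \delta_{BC}) = 0$, and three odd integers cannot sum to zero; this is tidier than the paper's cyclic over/under-rounding argument around the three circles. Your sufficiency direction, though, is a plan rather than a proof: phrases like ``the remaining signs can be oriented directly'' and ``the choice of signs is organized by which extra sector is available'' would need to become an actual enumeration. You do correctly isolate the one non-routine subcase --- all three doubles odd and the only nonempty extra cell of even size at least $2$, forcing a $\pm 2$ deviation there --- which is precisely the paper's subcase~3. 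One small correction to your prescription: taking $\delta = -1$ on \emph{all three} doubles alongside $\delta = \pm 2$ on the extra cell works only when the extra cell is $A \cap B \cap C$. If the nonempty extra cell is instead, say, $A \smallsetminus (B \cup C)$, then the constraints $\delta_B = \delta_C = 0$ force $\delta_{BC} = -\delta_{AB} = -\delta_{AC}$, so the three doubles cannot all carry the same sign; one must take $\delta_{AB} = \delta_{AC} = -1$, $\delta_{BC} = +1$, and $\delta_{R_A} = +2$. The fix is easy, but it shows that the ``largely mechanical check'' has a few genuine wrinkles worth writing out.
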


\begin{proof}
  In the proof we will make numerous references to the seven regions of the Venn diagram of $A,B,C$, and for convenience we label them according to the figure shown below.

  \begin{center}
    \begin{tikzpicture}[scale= 1.1]
      \draw (30:.6) circle (1);
      \draw (150:.6) circle (1);
      \draw (270:.6) circle (1);
      \node at (0,0) {$R_{ABC}$};
      \node at (90:.7) {$R_{AB}$};
      \node at (210:.7) {$R_{AC}$};
      \node at (330:.7) {$R_{BC}$};
      \node at (150:1){$R_A$};
      \node at (30:1) {$R_B$};
      \node at (270:1){$R_C$};
      \node at (150:1.9){$A$};
      \node at (30:1.9) {$B$};
      \node at (270:1.9){$C$};
    \end{tikzpicture}
  \end{center}
  
  We first show that if $R_{AB}$, $R_{BC}$, and $R_{AC}$ have odd size, and $R_A=R_B=R_C=R_{ABC}=\emptyset$, then $A,B,C$ is not splittable. Indeed, suppose towards a contradiction that $S$ simultaneously splits $A,B,C$. Without loss of generality we can suppose $|S\cap R_{AB}|>|R_{AB}|/2$. It follows that $|S\cap R_{BC}|<|R_{BC}|/2$, and then that $|S\cap R_{AC}|>|R_{AC}|/2$. This implies that $S$ does not split $R_{AB}\cup R_{AC}$, which is a contradiction because $R_{AB}\cup R_{AC}=A$ under our hypotheses.

  For the converse, we show that if $A,B,C$ do not have this configuration, then they are simultaneously splittable. We first consider the case when $R_A=R_B=R_C=\emptyset$. If all four of the sectors $R_{AB},R_{BC},R_{AC},R_{ABC}$ are even, then we can build a splitter $A$ by simply splitting each sector in half. If just one of these four sectors is odd we can simply round that sector up or down. If just two of these four sectors is odd we can round one of them up and the other down. This leaves only the following three subcases shown in Figure~\ref{fig:3-split-cases}.

\begin{figure}[h]
\begin{center}
\begin{tikzpicture}[scale=.8]
  \draw (30:.6) circle (1);
  \draw (150:.6) circle (1);
  \draw (270:.6) circle (1);
  \node at (0,0) {$o$};
  \node at (90:.7) {$o$};
  \node at (210:.7) {$o$};
  \node at (330:.7) {$o$};
  \node at (150:1.9){$A$};
  \node at (30:1.9) {$B$};
  \node at (270:1.9){$C$};
\end{tikzpicture}
\quad \quad
\begin{tikzpicture}[scale=.8]
  \draw (30:.6) circle (1);
  \draw (150:.6) circle (1);
  \draw (270:.6) circle (1);
  \node at (0,0) {$o$};
  \node at (90:.7) {$e$};
  \node at (210:.7) {$o$};
  \node at (330:.7) {$o$};
  \node at (150:1.9){$A$};
  \node at (30:1.9) {$B$};
  \node at (270:1.9){$C$};
\end{tikzpicture}
\quad \quad
\begin{tikzpicture}[scale=.8]
  \draw (30:.6) circle (1);
  \draw (150:.6) circle (1);
  \draw (270:.6) circle (1);
  \node at (0,0) {$e$};
  \node at (90:.7) {$o$};
  \node at (210:.7) {$o$};
  \node at (330:.7) {$o$};
  \node at (150:1.9){$A$};
  \node at (30:1.9) {$B$};
  \node at (270:1.9){$C$};
\end{tikzpicture}
\caption{Subcases 1, 2, and 3 from left to right. The symbols $e$ and $o$ denote even and odd sized sectors.\label{fig:3-split-cases}}
\end{center}
\end{figure}
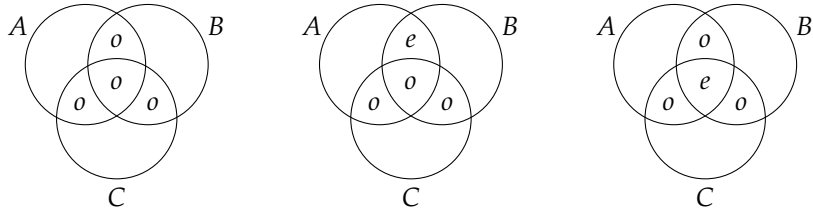

In subcases 1 and 2, we round the sector $R_{ABC}$ up, and each of the other odd sectors down. In subcase 3, we know that $|R_{ABC}|\geq2$ (or else we are in the converse situation), and so we can build a splitter $S$ with $|S\cap R_{ABC}|=|R_{ABC}|/2-1$, and each intersection of $S$ with $R_{AB},R_{BC},R_{AC}$ rounded down.

We next consider the case when at least one of $R_A,R_B,R_C$ is nonempty. If there exists a splitter $S$ for the configuration $A\smallsetminus R_A,B\smallsetminus R_B,C\smallsetminus R_C$, then we can build a splitter for $A,B,C$ by letting $S'\supset S$ and suitably rounding the intersection of $S'$ with $R_A,R_B,R_C$ up or down. Finally, if $A\smallsetminus R_A,B\smallsetminus R_B,C\smallsetminus R_C$ is not splittable, then by the above analysis we must have $R_{AB},R_{BC},R_{AC}$ odd and $R_{ABC}=\emptyset$. Suppose for concreteness that $R_A\neq\emptyset$. Then we can build a splitter $S$ such that $S\cap R_{AB}$ is rounded down, $S\cap R_{BC}$ is rounded up, $S\cap R_{AC}$ is rounded down, and $|S\cap R_A|=|R_A|/2-1$. This completes the proof.
\end{proof}

This lemma gives us enough information to generalize our lower bounds on splitting families and $2$-splitting families to $3$-splitting families. 

\begin{theorem}
  \label{3-splitting-lower}
  The minimum size of a $3$-splitting family of subsets of $[k]$ is $\Omega(k^{3/2})$.
\end{theorem}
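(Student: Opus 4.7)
The plan is to adapt the volume-method argument of Theorem~\ref{2-splitting-lower}, now taking the tasks to be ordered triples $(B_1,B_2,B_3)$ of subsets of $[k]$ that are splittable. By monotonicity of the minimum size in $k$ (restricting every $A\in\mathcal F$ to $[k-1]$ yields a $3$-splitting family on $[k-1]$), it suffices to handle even $k$. I will need two estimates: an upper bound on the volume $v$ of an optimal $3$-splitter, and a lower bound on the total number $N$ of splittable triples. Once I have both, Lemma~\ref{lem:vol-method} will give $|\mathcal F|\ge N/v=\Omega(k^{3/2})$.

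For the volume, the key observation is that the event ``$A$ splits $B_i$'' depends only on $|A\cap B_i|$, so the event is independent across the coordinates $i$. Writing $S_A=\{B\subseteq[k]:A\text{ splits }B\}$, the number of ordered triples $(B_1,B_2,B_3)$ simultaneously split by $A$ is exactly $|S_A|^3$. Maximizing this is the same as maximizing $|S_A|$, which by the computation inside the proof of Theorem~\ref{splitting-lower} is achieved at $|A|=k/2$ and yields $|S_A|\le 3\binom{k}{k/2}=O(2^k/\sqrt k)$. Cubing gives $v=O(2^{3k}/k^{3/2})$.

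For $N$, the total number of ordered triples is $8^k=2^{3k}$, and I need to show that the non-splittable ones form only a negligible fraction. Here is where Lemma~\ref{3-sets-splittable} does the work: a triple is non-splittable exactly when each element of $[k]$ lies in one of the four Venn regions $R_{AB}$, $R_{AC}$, $R_{BC}$, or the outside of $B_1\cup B_2\cup B_3$ (and the first three regions have odd cardinality). Even ignoring the parity constraint, each element has only $4$ possible roles, so there are at most $4^k=2^{2k}$ non-splittable triples. Hence $N\ge 2^{3k}-2^{2k}=\Omega(2^{3k})$, and combining with the volume bound completes the proof.

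The genuinely new ingredient is the non-splittable count in the third step: the volume estimate is essentially a cube of Theorem~\ref{splitting-lower}'s calculation, so the bound $\Omega(k^{3/2})$ really rests on extracting the exponential gap between $4^k$ and $8^k$ from the characterization in Lemma~\ref{3-sets-splittable}. I expect this to be the main conceptual step, and it also explains the authors' remark that the $\Omega(k^{n/2})$ lower bound remains conjectural for $n\ge 4$: absent a generalization of Lemma~\ref{3-sets-splittable} that bounds the non-splittable $n$-tuples by something like $(2^n-4)^k$, one cannot even begin the corresponding counting step.
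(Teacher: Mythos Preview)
Your argument is correct and follows the same volume-method scaffold as the paper: cube the single-set volume bound to get $v=O(2^{3k}/k^{3/2})$, then show $N=\Omega(2^{3k})$ and apply Lemma~\ref{lem:vol-method}. The one place you diverge is in bounding $N$. The paper constructs an explicit injection from non-splittable triples to splittable ones (sending $(B_1,B_2,B_3)$ to the triple of pairwise-intersection regions, which are disjoint and hence splittable), concluding $N\ge 8^k/2$. You instead read off from Lemma~\ref{3-sets-splittable} that every non-splittable triple assigns each point of $[k]$ to one of only four Venn regions, so there are at most $4^k$ of them, giving $N\ge 8^k-4^k$. Your route is shorter and yields a sharper constant; the paper's injection has the minor virtue of not needing the full ``only if'' direction of Lemma~\ref{3-sets-splittable} (it uses only that the four remaining regions are empty, not the odd-parity clause, though you also ignore parity). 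Either way the asymptotics are the same.

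One phrasing nit: the sentence ``the event `$A$ splits $B_i$' depends only on $|A\cap B_i|$, so the event is independent across the coordinates $i$'' is not the right justification---whether $A$ splits $B$ depends on $|A\cap B|$ \emph{and} $|B|$. What you actually need (and use) is the trivial fact that the set of triples simultaneously split by $A$ is $S_A\times S_A\times S_A$, so its size is $|S_A|^3$; no independence reasoning is required.
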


\begin{proof}
  The reasoning of the previous theorem quickly shows that
\[v\leq27\left(\sum\binom{k/2}{i}\binom{k/2}{i}\right)^3
\]
and hence (ignoring constants)
\[v\sim (2^k/\sqrt{k})^3
\]
However $N$ is more difficult to compute, since not every ordered triple $B_1,B_2,B_3$ is splittable. It suffices to show that at least half of them are splittable. (The true fraction is signficantly larger, but of any constant will suffice).

Indeed we can find an injection from the set of unsplittable collections to the set of splittable ones. Given an unsplittable collection $B_1,B_2,B_3$ we map it to $B_1\cap B_2\cap B_3^c$, $B_1\cap B_2^c\cap B_3$, $B_1^c\cap B_2\cap B_3^c$. The latter collection is disjoint and hence splittable. Moreover by Theorem~\ref{3-sets-splittable} these are the only nonempty sectors of the Venn diagram of $B_1,B_2,B_3$, and hence this map is injective.

We have now shown that $N\geq (2^k)^3/2$, and hence we achieve the desired volume lower bound as before.
\end{proof}

\begin{conjecture*}
The minimum size of an $n$-splitting family of subsets of $[k]$ is $\Omega(g(n)k^{n/2})$ where $g(n)$ is at worst $3^{-n}$.
\end{conjecture*}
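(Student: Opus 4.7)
The plan is to follow the template of Theorems~\ref{splitting-lower}, \ref{2-splitting-lower}, and \ref{3-splitting-lower}, applying the volume method of Lemma~\ref{lem:vol-method}. As before, by monotonicity we may assume $k$ is even. The first step is to show that the $n$-splitters of maximum volume have size $k/2$; this should follow from the single-coordinate argument used in Theorem~\ref{splitting-lower} combined with the fact that for each choice of splitter $A$, the count of splittable $n$-tuples $(B_1,\ldots,B_n)$ factors as an $n$-fold product over the coordinates $B_i$, so maximizing the $n$-volume reduces to maximizing the single-set volume.

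The second step is the volume estimate. I propose to iterate the inequality $2AB \leq A^2 + B^2$ one time per coordinate, exactly as in Theorem~\ref{splitting-lower}, where per coordinate one accounts for the three rounding contributions (even-sized $B_i$, odd-sized $B_i$ rounded up, odd-sized $B_i$ rounded down). Together with the Vandermonde identity $\sum_i \binom{k/2}{i}^2 = \binom{k}{k/2}$ and Stirling, this should yield
\[v \;\leq\; 3^n \binom{k}{k/2}^n \;\sim\; 3^n \cdot \frac{2^{nk}}{k^{n/2}}.\]
This is precisely the source of the factor $3^{-n}$ in $g(n)$.

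The third step is to bound the number $N$ of splittable ordered $n$-tuples from below by a constant fraction of $(2^k)^n$. I would try to construct an injection from the set of unsplittable ordered $n$-tuples into the set of splittable ones, as was done for $n=3$ in the proof of Theorem~\ref{3-splitting-lower}: send an unsplittable collection $(B_1,\ldots,B_n)$ to the pairwise disjoint collection consisting of its ``distinguished'' Venn sectors. Since disjoint collections are always splittable by the sector-by-sector construction given after Definition~\ref{n-splitting}, and since a conjectural generalization of Lemma~\ref{3-sets-splittable} would show that these distinguished sectors determine the original collection, such a map would indeed be injective. This would yield $N \geq \tfrac12 (2^k)^n$, and combining with the volume estimate gives $|\mathcal F|\geq N/v = \Omega(k^{n/2}/3^n)$ as desired.

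The main obstacle is squarely this third step. For $n = 3$ the characterization in Lemma~\ref{3-sets-splittable} is already delicate, involving parities of all three pairwise-exclusive sectors together with emptiness of the remaining four Venn sectors. For general $n$ the Venn diagram has $2^n-1$ sectors, and the condition for $n$-splittability becomes a system of linear parity and size constraints on sector cardinalities, with many new extremal configurations to rule out. Any workable extension of Lemma~\ref{3-sets-splittable}, together with the conjectured generalization of Theorem~\ref{disjoint-hardest} (which we expect to say that, in a suitable sense, pairwise disjoint collections are the hardest to simultaneously split), should supply both the characterization and the target set of the injection. Once those combinatorial inputs are in hand, the volume computation and the final bound are routine, and the conjectured $\Omega(3^{-n} k^{n/2})$ lower bound follows.
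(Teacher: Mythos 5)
This statement is a \emph{conjecture}, not a theorem, and the paper supplies no proof of it: it establishes the bound only for $n = 1, 2, 3$ (Theorems~\ref{splitting-lower}, \ref{2-splitting-lower}, and \ref{3-splitting-lower}) and explicitly frames the general statement as contingent on generalizing Lemma~\ref{3-sets-splittable} to higher $n$. Your proposal is not a proof either, and you are candid about this; what you have written is essentially the same conditional plan the paper has in mind. Your steps one and two are correct and match the paper's calculations: the $n$-fold volume of a fixed set $A$ factors as a product over coordinates because $A$ splits $(B_1,\ldots,B_n)$ iff it splits each $B_i$, so the single-set argument gives that size-$k/2$ sets have maximal volume and that $v \le 3^n \binom{k}{k/2}^n \sim 3^n (2^k/\sqrt{k})^n$; this is exactly where the $3^{-n}$ in $g(n)$ comes from, as you say, and it is the pattern $3, 9, 27$ visible in the three proved cases.

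The gap is exactly where you locate it: step three, lower-bounding $N$ by a constant fraction of $(2^k)^n$, requires understanding which ordered $n$-tuples are splittable, and the only available tool is the $n=3$ characterization of Lemma~\ref{3-sets-splittable}, which does not currently generalize. One small correction: the conjectured generalization of Theorem~\ref{disjoint-hardest} (that pairwise disjoint collections minimize the splitter count) is not what is needed here; that result feeds the probabilistic \emph{upper} bound of Theorem~\ref{2-split-upper}. For the lower bound, the missing ingredient is purely a combinatorial characterization of unsplittable $n$-tuples tight enough to inject them into the splittable ones, in the spirit of the injection used in Theorem~\ref{3-splitting-lower}. Since the paper itself conjectures that recognizing splittability is NP-complete, it is quite plausible that no clean characterization exists, in which case a different route to the $N \ge c(2^k)^n$ estimate (for instance, a direct probabilistic argument that a random $n$-tuple is splittable with probability bounded away from zero) would be needed. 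Either way, your assessment of where the conjecture stands, and of what its proof would require, agrees with the paper's.
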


We also conjecture that the problem of deciding whether an arbitrary collection of sets is splittable is NP-complete.

The remainder of this section is devoted to establishing our upper bound on the minimum size of a $2$-splitting family, and stating a conjecture concerning an analogous upper bound on the minimum size of an $n$-splitting family.

\begin{theorem}
  \label{2-split-upper}
  The minimal size of a $2$-splitting family of subsets of $[k]$ is $O(k^2)$.
\end{theorem}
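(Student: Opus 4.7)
The plan is to adapt the probabilistic method from the proof of Theorem~\ref{n-sep-upper}, invoking Lemma~\ref{prob-method}. I would take as tasks the splittable ordered pairs $(B_1, B_2)$ of subsets of $[k]$, so that $N \leq (2^k)^2 = 4^k$, and take as random objects the uniformly chosen subsets $A \subseteq [k]$ (each element included by an independent fair coin flip). If we can show that the probability $p$ that a random $A$ simultaneously splits any given splittable pair is $\Omega(1/k)$, then, using $-\log(1 - p) \sim p$ for small $p$, Lemma~\ref{prob-method} yields
\[
|\mathcal F| < \frac{\log(4^k)}{-\log(1 - p)} + 1 = O(k^2)\text{,}
\]
as desired.

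The crux is therefore a splitting analogue of Lemma~\ref{n-sep-prob}: for every splittable pair $B_1, B_2 \subseteq [k]$, a uniformly random $A$ splits both with probability $\Omega(1/k)$. To obtain this, I would decompose $[k]$ by the Venn diagram of $B_1, B_2$: set $a = |B_1 \cap B_2|$, $b = |B_1 \smallsetminus B_2|$, $c = |B_2 \smallsetminus B_1|$, and let $X, Y, Z$ denote the sizes of the intersections of $A$ with these three sectors. Then $X, Y, Z$ are independent binomial random variables with parameters $(a, 1/2)$, $(b, 1/2)$, $(c, 1/2)$, and $A$ simultaneously splits $B_1$ and $B_2$ precisely when $X + Y$ lies in the prescribed two-element central set for $|B_1|$ and $X + Z$ lies in that for $|B_2|$.

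The estimate would then come from lower bounding
\[
P(X+Y=s,\ X+Z=t) = \sum_x P(X=x)P(Y=s-x)P(Z=t-x)
\]
for a well-chosen central pair $(s,t)$. A local central limit argument (or a direct Stirling-based computation) shows that this joint mass is of order $1/\sqrt{ab+ac+bc}$, since the covariance matrix of $(X+Y,X+Z)$ has determinant $(ab+ac+bc)/16$. As $ab+ac+bc \leq \tfrac{1}{3}(a+b+c)^2 \leq k^2/3$, this produces $p = \Omega(1/k)$ in the generic regime. In the degenerate cases where one of $a, b, c$ vanishes or is bounded, the event reduces to a product of one or two independent one-dimensional binomial peak estimates, each of which already gives an $\Omega(1/\sqrt{k})$ bound, hence also $\Omega(1/k)$.

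The main obstacle is making this probability estimate rigorous and uniform in $(a,b,c)$. The local CLT gives the right magnitude transparently, but it requires either a somewhat delicate Stirling-type asymptotic for the triple-binomial sum above or an appeal to a uniform local limit theorem. Degenerate sector sizes must also be carved out and analysed separately, although in each such case the resulting bound only improves on the generic estimate, so the overall lower bound $p = \Omega(1/k)$ holds uniformly over all splittable pairs.
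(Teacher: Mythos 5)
Your proposal is correct in outline, but it takes a genuinely different route from the paper's. The paper also invokes Lemma~\ref{prob-method} with $N \leq (2^k)^2$ and the Stirling-type bound $p_t \geq c/\sqrt{t}$ for splitting a single set. However, to get the two-set probability bound, the paper does \emph{not} estimate the joint mass directly: it instead proves the monotonicity result Theorem~\ref{disjoint-hardest} (the number of simultaneous splitters of $S,T$ with fixed sizes is nondecreasing in $|S\cap T|$), reduces to the disjoint case, and there uses independence to conclude $p_{S,T} = p_s p_t \geq c^2/k$. Your approach bypasses Theorem~\ref{disjoint-hardest} entirely by decomposing into the three Venn sectors with independent binomials $X, Y, Z$ and arguing via a local CLT that the joint mass $P(X+Y=s,\, X+Z=t)$ at central values is $\Omega\bigl(1/\sqrt{ab+ac+bc}\bigr) = \Omega(1/k)$. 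Your computation of the covariance determinant $(ab+ac+bc)/16$ is correct, as is the bound $ab+ac+bc \leq (a+b+c)^2/3 \leq k^2/3$, and your treatment of the degenerate sector cases is sound. What each approach buys: yours is conceptually more transparent and even sidesteps the edge case $s+t>k$ (where the paper's reduction to disjoint $S,T$ cannot literally be applied), at the cost of needing a local limit theorem that is uniform over all sector sizes $(a,b,c)$ with $a+b+c\leq k$ --- this uniformity is the real work, and your write-up acknowledges but does not supply it. The paper's route replaces that analytic burden with the combinatorial proof of Theorem~\ref{disjoint-hardest}, which it needs anyway as a stepping stone toward the conjectured $n$-splitting generalization. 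So the two proofs front-load the difficulty in different places, but yours would serve as a valid alternative if the uniform local CLT (or an equivalent Stirling estimate for the triple-binomial convolution) were filled in.
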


For this, we will need the following key result. First note that the number of sets $A$ which simultaneously splits a pair of sets $S,T$ depends only on $|S|$, $|T|$, and $|S\cap T|$. If we fix $|S|$ and $|T$ in advance, then this number depends only on $|S\cap T|$.

\begin{theorem}
  \label{disjoint-hardest}
  Let $s+t\leq k$. Then the number of sets $A$ which simultaneously split sets $S,T$ of size $s,t$ respectively is a nondecreasing function of $b=|S\cap T|$.
\end{theorem}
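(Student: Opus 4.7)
The plan is to extract an explicit algebraic formula for the splitter count $N(b)$ from a generating-function identity, and then verify $N(b+1) - N(b) \geq 0$ term-by-term via a single sign lemma.

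I would begin with the bivariate generating function
\[ g_b(x,y) := \sum_{A \subseteq [k]} x^{|A \cap S|} y^{|A \cap T|} = 2^{k-s-t+b}(1+xy)^b (1+x)^{s-b}(1+y)^{t-b}, \]
obtained by factoring over the four Venn sectors of $S$ and $T$. The splitter count equals $N(b) = \sum_{u \in I_s, v \in I_t}[x^u y^v] g_b(x,y)$, where $I_s = \{\lfloor s/2 \rfloor, \lceil s/2 \rceil\}$. To decouple the $x$ and $y$ variables I would use the key identity $2(1+xy) = (1+x)(1+y) + (1-x)(1-y)$ and expand $(1+xy)^b$ binomially. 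Defining
\[ A(s,b,j) := \sum_{u \in I_s}[x^u](1+x)^{s-b+j}(1-x)^{b-j}, \]
the expansion gives the clean factored form
\[ N(b) = 2^{k-s-t} \sum_{j=0}^{b} \binom{b}{j} A(s,b,j)\, A(t,b,j). \]

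Next I would exploit the elementary identity $A(s, b+1, j) = A(s, b, j-1)$, which holds because both sides extract the same central coefficient(s) from $(1+x)^{s-b-1+j}(1-x)^{b-j+1}$, together with Pascal's rule $\binom{b+1}{j+1} = \binom{b}{j} + \binom{b}{j+1}$. After reindexing the sum for $N(b+1)$ and subtracting, a short calculation yields
\[ N(b+1) - N(b) = 2^{k-s-t}\left[A(s,b,-1) A(t,b,-1) + \sum_{j=0}^{b-1} \binom{b}{j+1} A(s,b,j)\, A(t,b,j)\right]. \]
Thus the theorem reduces to showing that every product $A(s,b,j)\, A(t,b,j)$ is nonnegative.

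This last step is the main obstacle, and the heart of the argument is the following sign lemma: for all nonnegative integers $m, n$, the central coefficient(s) of $(1+x)^m(1-x)^n$ vanish when $n$ is odd, while when $n = 2i$ is even they all share the sign $(-1)^i$ \emph{independent of} $m$. Granting this, the sign of $A(s,b,j)$ depends only on $b-j$, so $A(s,b,j)$ and $A(t,b,j)$ share a common sign (or both vanish), making their product nonnegative. To prove the lemma I would evaluate $(1+x)^m(1-x)^n$ on the unit circle using $1 + e^{i\theta} = 2\cos(\theta/2) e^{i\theta/2}$ and $1 - e^{i\theta} = -2i\sin(\theta/2) e^{i\theta/2}$, giving
\[ (1+x)^m(1-x)^n\big|_{x = e^{i\theta}} = 2^s(-i)^n \cos^m(\theta/2) \sin^n(\theta/2) e^{is\theta/2}. \]
Extracting the relevant central Fourier coefficient(s) produces a real integral whose integrand is a product of even powers of $\cos(\theta/2)$ and $\sin(\theta/2)$ (hence nonnegative), scaled by $(-i)^n = (-1)^{n/2}$ when $n$ is even; when $n$ is odd, the substitution $\phi \mapsto \pi - \phi$ forces the relevant integrand to be antisymmetric about $\theta = \pi$, so the integral vanishes.
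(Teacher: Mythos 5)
Your proof is correct and takes a genuinely different route from the paper's. The paper's argument proceeds by cases on the parities of $s$ and $t$: Lemma~\ref{lem:disj-hard-even} handles the even--even case by passing through two intermediate configurations $(S',T')$ and $(S'',T'')$, relating the splitter counts by exact factors of $4$ and a $\ge\tfrac14$ inequality, and ultimately reducing to a term-by-term nonnegativity of a binomial expression proved by unimodality of $\binom{s-b-1}{\cdot}$ and $\binom{t-b-1}{\cdot}$; Lemma~\ref{lem:disj-hard-mixed} then reduces the odd--even and odd--odd cases to the even case by adjoining a dummy element. Your argument instead packages everything into a single identity via the bivariate generating function and the decoupling relation $2(1+xy)=(1+x)(1+y)+(1-x)(1-y)$, which yields the closed form $N(b)=2^{k-s-t}\sum_{j}\binom{b}{j}A(s,b,j)A(t,b,j)$ with $S$- and $T$-dependence cleanly separated; the shift identity $A(s,b+1,j)=A(s,b,j-1)$ together with Pascal's rule then makes $N(b+1)-N(b)$ manifestly a nonnegative combination. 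This is more uniform (no parity casework at the top level, and no need to adjoin dummy elements), and it gives an explicit formula for $N(b)$ rather than just a monotonicity statement. What you pay for this is the sign lemma, which requires either the Fourier computation you sketch or some other nontrivial input (e.g.\ properties of Krawtchouk polynomials), whereas the paper's bottleneck inequality needs only elementary unimodality of binomial coefficients. One small wording issue in your lemma: when $s=m+n$ is odd and $n$ is odd, the two central coefficients of $(1+x)^m(1-x)^n$ do not each vanish --- the palindromy relation $[x^i]P=(-1)^n[x^{s-i}]P$ only forces them to be negatives of one another --- so it is their \emph{sum} $A(s,b,j)$ that vanishes. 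Your Fourier argument does establish exactly this (the antisymmetry kills the integral for the sum), and that is precisely what the application requires, so the proof is sound; just state the lemma in terms of $A(s,b,j)$ rather than the individual coefficients.
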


Although the statement of Theorem~\ref{disjoint-hardest} feels intuitive, our proof is somewhat technical and is divided into several cases. We first consider the case when $s,t$ are both even. For this, let us fix sets $S,T$ as in the statement of the theorem. For the proof we will also fix elements $x\in S\smallsetminus T$ and $y\in T\smallsetminus S$; we may assume these exist since otherwise $S\cap T$ is already as large as it can be. We will also need the pairs $S'=S\smallsetminus\{x\}$, $T'=T\smallsetminus\{y\}$, and $S''=S$, $T''=(T\smallsetminus\{y\})\cup\{x\}$. Refer to Figure~\ref{fig:even-even} to visualize these three configurations.

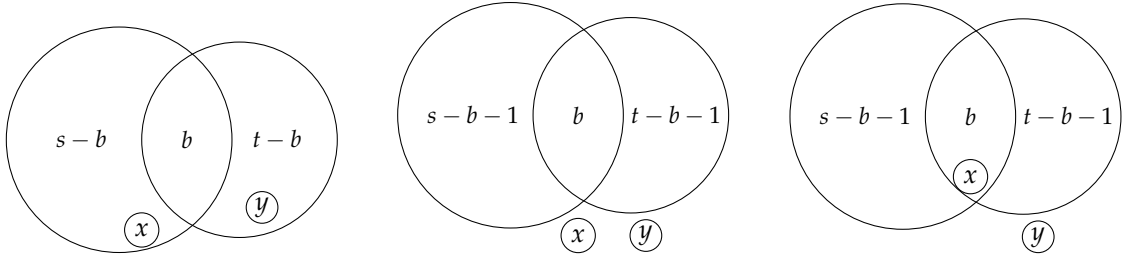
\begin{figure}[h]
\begin{tikzpicture}
\draw (0,0) circle (1.5);
\draw (1.6,0) circle (1.3);
\node at (-.5,0) {\small$s-b$};
\node at (.9,0) {\small$b$};
\node at (2.1,0) {\small$t-b$};
\node [circle,draw,inner sep=2pt] at (.3,-1.2) {$x$};
\node [circle,draw,inner sep=1pt] at (1.9,-.9) {$y$};
\end{tikzpicture}
\qquad
\begin{tikzpicture}
\draw (0,0) circle (1.5);
\draw (1.6,0) circle (1.3);
\node at (-.5,0) {\small$s-b-1$};
\node at (.9,0) {\small$b$};
\node at (2.2,0) {\small$t-b-1$};
\node [circle,draw,inner sep=2pt] at (.9,-1.6) {$x$};
\node [circle,draw,inner sep=1pt] at (1.8,-1.6) {$y$};
\end{tikzpicture}
\qquad
\begin{tikzpicture}
\draw (0,0) circle (1.5);
\draw (1.6,0) circle (1.3);
\node at (-.5,0) {\small$s-b-1$};
\node at (.9,0) {\small$b$};
\node at (2.2,0) {\small$t-b-1$};
\node [circle,draw,inner sep=2pt] at (.9,-.8) {$x$};
\node [circle,draw,inner sep=1pt] at (1.8,-1.6) {$y$};
\end{tikzpicture}
\caption{A diagram showing the steps of the proof when $s,t$ are even. The elements of $\mathcal A$ split the configuration on the left, the elements of $\mathcal B$ split the configuration in the middle, and the elements of $\mathcal C$ split the configuration on the right.\label{fig:even-even}}
\end{figure}

\begin{lemma}
  \label{lem:disj-hard-even}
  Suppose $s$ and $t$ are both even.
  \begin{itemize}
  \item If $\mathcal A$ is the family of sets that simultaneously split $S,T$ and $\mathcal B$ is the family of sets that simultaneously split $S',T'$, then $4|\mathcal A| = |\mathcal B|$.
  \item If $\mathcal C$ is the family of sets that simultaneously split $S'',T''$, then $\frac{1}{4}|\mathcal B| \leq |\mathcal C|$.
  \end{itemize}
\end{lemma}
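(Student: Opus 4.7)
The plan is to parametrize each candidate set $A \subseteq [k]$ by the triple $(A_0, \alpha, \beta)$, where $A_0 = A \cap U$ for $U = [k] \smallsetminus \{x, y\}$, and $\alpha = [x \in A]$, $\beta = [y \in A]$. Writing $P = S \cap U$, $Q = T \cap U$, $a = |A_0 \cap P|$, $c = |A_0 \cap Q|$, and using the fact that $s, t$ are even, membership in each of the three families becomes a simple linear condition:
\begin{align*}
  A \in \mathcal A &\iff a + \alpha = s/2 \text{ and } c + \beta = t/2,\\
  A \in \mathcal B &\iff a \in \{s/2-1, s/2\} \text{ and } c \in \{t/2-1, t/2\},\\
  A \in \mathcal C &\iff a + \alpha = s/2 \text{ and } c + \alpha = t/2.
\end{align*}
The first claim is essentially immediate from this bookkeeping: each $A \in \mathcal A$ yields four distinct elements $A$, $A\triangle\{x\}$, $A\triangle\{y\}$, $A\triangle\{x,y\}$ of $\mathcal B$, and conversely each $A \in \mathcal B$ admits a unique adjustment of $(\alpha,\beta)$ landing in $\mathcal A$, giving $|\mathcal B| = 4|\mathcal A|$.

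For the second claim, let $N_{(a,c)}$ count the $A_0 \subseteq U$ with $|A_0 \cap P| = a$ and $|A_0 \cap Q| = c$. Summing out the allowed $(\alpha, \beta)$ in each class yields
\[|\mathcal A| = \sum_{a \in \{s/2-1,\, s/2\},\, c \in \{t/2-1,\, t/2\}} N_{(a,c)}, \qquad |\mathcal C| = 2\bigl(N_{(s/2,\,t/2)} + N_{(s/2-1,\,t/2-1)}\bigr),\]
so the desired inequality $\tfrac14 |\mathcal B| \le |\mathcal C|$ reduces to the positive-correlation statement
\[N_{(s/2,\,t/2)} + N_{(s/2-1,\,t/2-1)} \ge N_{(s/2,\,t/2-1)} + N_{(s/2-1,\,t/2)}.\]
To prove this I would partition $U$ according to the Venn diagram of $P$ and $Q$, index summation by $j = |A_0 \cap (P \cap Q)|$, and factor out the free contribution from $U \smallsetminus (P \cup Q)$. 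The difference of the two sides then rearranges as
\[\sum_j \binom{b}{j}\left[\binom{s-1-b}{s/2-j} - \binom{s-1-b}{s/2-1-j}\right]\left[\binom{t-1-b}{t/2-j} - \binom{t-1-b}{t/2-1-j}\right].\]

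The main obstacle, and where the evenness of $s$ and $t$ earns its keep, is the sign analysis of the two bracketed differences. Using the ratio identity $\binom{m}{r}/\binom{m}{r-1} = (m-r+1)/r$, each bracket flips from non-positive to non-negative at precisely $j = b/2$; the shifts $s-1-b$ and $t-1-b$ in the upper indices line up exactly so that the two sign-change points coincide. With matched signs, each summand is a product of two same-sign factors times $\binom{b}{j} \ge 0$ and so is non-negative, completing the argument. The remaining work is the straightforward assembly of the linear identities and does not introduce further complications.
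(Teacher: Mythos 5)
Your proof is correct and follows essentially the same route as the paper's: both reduce the second claim to the correlation inequality $N_{(s/2,t/2)} + N_{(s/2-1,t/2-1)} \ge N_{(s/2,t/2-1)} + N_{(s/2-1,t/2)}$, expand it as a sum over $j = |A_0 \cap P \cap Q|$, and factor each term as a product of two binomial-coefficient differences that change sign together at $j = b/2$ by unimodality. Your $(A_0,\alpha,\beta)$ parametrization is slightly tidier bookkeeping than the paper's set-theoretic description, but the decomposition and the key inequality are identical.
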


\begin{proof}
  Since the elements of $[k]\smallsetminus(T\cup S)$ have no effect on our claims, we may assume without loss of generality that $s+t-b=k$. Now begin by noting that $\mathcal A\subseteq\mathcal B$. Indeed, suppose that $A\in \mathcal A$, so that $A$ splits $S$ and $T$. If $x\in A$ then $|A\cap S'|=\floor{(t-1)/2}$, and if $x\notin A$ then $|A\cap S' |=\ceiling{(t-1)/2}$. The corresponding statement for $T'$ also holds, so $A$ splits $S'$ and $T'$.

  In fact for each element $A\in\mathcal A$ we can generate four distinct elements of $\mathcal B$. For this, given $A\in\mathcal A$ we let $A_1, A_2, A_3, A_4$ be sets such that $A_i \cap (T' \cup S') = A \cap(T' \cup S')$ and $x \in A_1 \cap A_2 \cap A_3^c \cap A_4^c$ and  $y \in A_1^c \cap A_2 \cap A_3 \cap A_4^c$. If $B\in\mathcal A$ and $B\neq A$, let $B_i$ denote the four splitters generated in this manner from $B$. It is clear that $A_i\neq B_j$ for $i\neq j$. Moreover since $s,t$ are even, $A$ and $B$ must disagree on $S'\cup T'$, and so $A_i\neq B_i$ as well. Lastly it is not difficult to see that every element of $\mathcal B$ is of the form $A_i$ for some $A\in\mathcal A$ this concludes the proof that $4|\mathcal A|=|\mathcal B|$.

  For the second statement, it suffices to show that at least one fourth of the elements of $\mathcal B$ are in fact elements of $\mathcal C$. To this end let $B\in\mathcal B$, so $B$ is a splitter of both $S'$ and $T'$. Observe that $B$ will be a splitter of both $S''$ and $T''$ if and only if either of the following conditions hold:
  \begin{itemize}
  \item $|B\cap S'|=s/2$, $|B\cap T'|=t/2$, and $x\notin B$; or
  \item $|B\cap S'|=s/2-1$, $|B\cap T'|=t/2-1$, and $x\in B$.
  \end{itemize}
  We now claim that at least half of the elements of $\mathcal B$ satisfy either $B\cap S'=s/2$, $B\cap T'=t/2$ or else $B\cap S'=s/2-1$, $B\cap T'=t/2-1$. Once this claim is established, the proof will be complete because the conditions $x\notin B$ and $x\in B$ are independent of these and occur exactly half the time.

  To complete the claim, the number of elements of $\mathcal B$ that satisfy either $B\cap S'=s/2$, $B\cap T'=t/2$ or else $B\cap S'=s/2-1$, $B\cap T'=t/2-1$ is given by:
  \[\sum_{i=0}^b{b\choose i}\left[{s-b-1\choose s/2-i}{t-b-1\choose t/2-i}
    +{s-b-1\choose s/2 - i - 1} {t-b-1 \choose t/2 - i - 1} \right]
  \]
  On the other hand, the number of splitters that satisfy either $B\cap S'=s/2-1$, $B\cap T'=t/2$ or else $B\cap S'=s/2$, $B\cap T'=t/2-1$ is given by
  \[\sum_{i=0}^b{b\choose i}\left[{s-b-1\choose s/2-i-1}{t-b-1\choose t/2-i}
    +{s-b-1 \choose s/2 - i} {t-b-1 \choose t/2 - i - 1} \right]
  \]

  We shall show that the first sum is greater than or equal to the second sum, and in fact that this is true term-by-term. Taking the $i^\text{th}$ term of the first sum minus the $i^\text{th}$ term in the second sum and factoring, this desired conclusion is equivalent to the following:
  \[\left[{s-b-1 \choose s/2 - i} - {s-b-1 \choose s/2 - i - 1}\right]
  \left[{t-b-1 \choose t/2 - i} - {t-b-1 \choose t/2 - i - 1}\right] \geq 0
  \]
  By the unimodality of the binomial coefficients, both of the terms in the above product are negative for $i<b/2$ and both are nonnegative for $i\geq b/2$, so the inequality is always true. This completes the proof of the claim, and therefore the proof that at least one fourth of the elements of $\mathcal B$ also lie in $\mathcal C$.
\end{proof}

We now consider the case when $s$ is odd and $t$ is even. Once again we may let $x\in S\smallsetminus T$ and $y\in T\smallsetminus S$. We may also assume there exists $z\in[k]\smallsetminus(S\cup T)$; if there isn't we artificially add one to $[k]$. We will need the sets $S'=S\cup\{z\}$ and $T'=T\cup\{x\}\smallsetminus\{y\}$.

\begin{lemma}
  \label{lem:disj-hard-mixed}
  Suppose $s$ is odd and $t$ is even.
  \begin{itemize}
  \item If $\mathcal A$ is the family of sets that split $S,T$ simultaneously and $\mathcal A'$ is the family of sets that split $S',T$ simultaneously, then $|\mathcal A| = 2|\mathcal A'|$.
  \item If $\mathcal C'$ is the family of sets that split $S',T'$ simultaneously, then $|\mathcal A'|\leq|\mathcal C'|$.
  \item If $\mathcal C$ is the family of sets which split $S,T'$ simultaneously, then $|\mathcal C|=2|\mathcal C'|$.
  \end{itemize}
\end{lemma}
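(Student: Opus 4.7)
The plan is to observe that the three claims chain together to give $|\mathcal A| = 2|\mathcal A'| \le 2|\mathcal C'| = |\mathcal C|$, which is the parity-mixed monotonicity step needed by Theorem~\ref{disjoint-hardest}. The first and third claims will be proved by a direct bijective argument exploiting the fact that $z$ is ``free'' with respect to the conditions defining $\mathcal A$ (and $\mathcal C$) but ``constrained'' by those defining $\mathcal A'$ (and $\mathcal C'$), while the second claim will be reduced to a direct application of the already-established Lemma~\ref{lem:disj-hard-even}.

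For Claim~1, the key observation is that the splitting conditions defining $\mathcal A$ make no reference to the element $z$, since $z\notin S\cup T$, whereas those defining $\mathcal A'$ do, since $z\in S'$. Because $s$ is odd, splitting $S$ requires $|A\cap S|\in\{(s-1)/2,(s+1)/2\}$; because $s+1$ is even, splitting $S'$ requires $|A\cap S'|=(s+1)/2$, which for each admissible value of $|A\cap S|$ forces exactly one choice of whether $z\in A$. Since the condition on $T$ is identical for both families, the involution $A\mapsto A\triangle\{z\}$ acts freely on $\mathcal A$ and exactly one element of each $2$-element orbit lies in $\mathcal A'$; hence $|\mathcal A|=2|\mathcal A'|$. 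Claim~3 is proved by the identical argument with $T$ replaced by $T'$, once one verifies $z\notin S\cup T'$, which follows from $z\notin T$ together with $z\neq x$ (the latter because $x\in S$ but $z\notin S$).

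For Claim~2, the observation is that $S'$ has even size $s+1$ and both $T$ and $T'$ have even size $t$, while $|S'\cap T|=b$ and $|S'\cap T'|=b+1$, because $T'$ is obtained from $T$ by removing $y\notin S'$ and adding $x\in S'$. This is precisely the configuration handled by Lemma~\ref{lem:disj-hard-even}, with the current $S'$, $T$, and $T'$ playing the roles of the $S$, $T$, and $T''$ there. After verifying that $x\in S'\setminus T$ (since $x\in S\setminus T\subseteq S'\setminus T$) and $y\in T\setminus S'$ (since $y\in T\setminus S$ and $y\neq z$), so that $x$ and $y$ play the roles of the corresponding elements in Lemma~\ref{lem:disj-hard-even}, the two statements of that lemma combine to give $4|\mathcal A'|=|\mathcal B|\le 4|\mathcal C'|$ for the intermediate family $\mathcal B$ produced there, and so $|\mathcal A'|\le|\mathcal C'|$. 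The main obstacle is essentially bookkeeping: confirming that every parity and sector-size hypothesis required to invoke Lemma~\ref{lem:disj-hard-even} genuinely holds after the identification, rather than any deeper combinatorial content beyond what that lemma already supplies.
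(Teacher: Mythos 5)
Your proposal is correct and follows essentially the same approach as the paper: the first and third claims via the free involution $A \mapsto A \triangle \{z\}$ pairing up elements of $\mathcal A$ (resp.\ $\mathcal C$) with exactly one representative landing in $\mathcal A'$ (resp.\ $\mathcal C'$), and the second claim by invoking Lemma~\ref{lem:disj-hard-even} with $S'$, $T$, $T'$ in the roles of $S$, $T$, $T''$. Your write-up is in fact somewhat more careful than the paper's, which contains a typographical slip in the proof of the first bullet (writing $x$ and $\mathcal B$ where $z$ and $\mathcal A'$ are intended); you correctly identify $z$ as the pivot element and explicitly verify the side conditions $x\in S'\setminus T$, $y\in T\setminus S'$, and $z\notin S\cup T'$ needed for the reduction, which the paper leaves implicit.
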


\begin{proof}
  For the first statement, if $B\in\mathcal B$ then both $B\cup\{x\}$ and $B\smallsetminus\{x\}$ lie in $\mathcal A$. On the other hand if $A\in\mathcal A$ then exactly one of $A\cup\{x\}$ or $A\smallsetminus\{x\}$ lies in $\mathcal B$. This shows that there are exactly two elements of $\mathcal A$ for every element of $\mathcal B$, so $|\mathcal A|=2|\mathcal B|$.

  Now, the second statement is an instance of Lemma~\ref{lem:disj-hard-even}, applied to the families $\mathcal A'$ and $\mathcal C'$.

  The third statement is an instance of the first statement.
\end{proof}

We are now ready to complete the proof of the key result.

\begin{proof}[Proof of Theorem~\ref{disjoint-hardest}]
  Let $\mathcal A$ be the set of splitters of $S,T$ where $|S|=s$, $|T|=t$, and $|S\cap T|=b$, and let $\mathcal B$ be the set of splitters of $S',T'$ where $|S'|=s$, $|T'|=t$, and $|S'\cap T'|=b+1$. We wish to show that $|\mathcal A|\leq|\mathcal B|$. The case when $s,t$ are both even is handled by Lemma~\ref{lem:disj-hard-even}, and the cases when just one of $s,t$ is even is handled by Lemma~\ref{lem:disj-hard-mixed}. In the remaining case when $s,t$ are both odd, we can use a method identical to the proof of Lemma~\ref{lem:disj-hard-mixed}. More specifically, one adjoins a new element $z$ to $T$ and then applies the statement of Lemma~\ref{lem:disj-hard-mixed}.  \end{proof}

Finally, we can establish our upper bound for the minimum size of a $2$-splitting family.

\begin{proof}[Proof of Theorem~\ref{2-split-upper}]
  If $T\subseteq[k]$ with $|T|=t$, then by a Stirling-type approximation computed in Section~2 of \cite{stinson-baby}, the probability that $T$ is split by a random subset of $[k]$ has a lower bound of $c/\sqrt{t}$ where $c$ is a constant.

  Next, if $S\subseteq[k]$ with $|S|=s$, then by Theorem~\ref{disjoint-hardest}, the probability $p_{S,T}$ that a random set simultaneously splits $S$ and $T$ is minimized when $S \cap T = \emptyset$. In this case, the event that $S$ is split and the event that $T$ is split are independent, and so $p_{S,T} = p_sp_t$, where $p_n$ is the probability that a randomly chosen subset of $[k]$ splits a given set of size $n$. Minimizing over the possible sizes $s$ and $t$, we have that $p_{S,T}$ has lower bound $c^2/k$. We now invoke Lemma~\ref{prob-method} with this value of $p$ and $N=(2^k)^2$ (the number of ordered pairs of subsets of $[k]$). This gives a $2$-splitting family $\mathcal F$ of subsets of $[k]$ which satisfies
\begin{align*}
|\mathcal F| &< \frac{\log ((2^k)^2)}{- \log (1-c^2/k)} +1 \\
&\le \frac{2k}{-\log (1 - c^2/k)}+1
\end{align*}
This latter expression is $O(k^2)$, as desired.
\end{proof}

We close by conjecturing that the analog of Theorem~\ref{disjoint-hardest} holds for configurations of $n$ sets as well. If this conjecture holds, one can easily obtain an upper bound of $O(g(n)k^{n/2+1})$ on the minimal size of an $n$-splitting family over $[k]$.

\begin{conjecture*}
Let $B_1,\ldots,B_n$ be a collection of subsets of $[k]$. Then the number of splitters of $B_1,\ldots,B_n$ is minimized when the collection is pairwise disjoint.
\end{conjecture*}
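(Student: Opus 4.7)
My plan is to prove the conjecture by induction on the total pairwise overlap $\Omega = \sum_{i<j}|B_i\cap B_j|$, with the marginals $|B_i|$ held fixed. The base case $\Omega = 0$ is pairwise disjoint and trivial, since any two pairwise disjoint configurations with the same marginals are related by a permutation of $[k]$ and hence have the same splitter count. For the inductive step I would establish a \emph{swap lemma}: if $\Omega > 0$ then there is a modification $B_1,\ldots,B_n \rightsquigarrow B_1',\ldots,B_n'$ which preserves all marginals, strictly decreases $\Omega$, and does not increase the total number of splitters. Iterating the swap lemma yields the result.

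The swap generalizes the construction used in the proof of Theorem~\ref{disjoint-hardest}: pick $i,j$ with $B_i \cap B_j \ne \emptyset$, fix $x \in B_i \cap B_j$, and fix $y \in [k]\smallsetminus\bigcup_\ell B_\ell$, enlarging $[k]$ by a dummy element if necessary (this only multiplies every splitter count by $2$). Set $B_j' = (B_j\smallsetminus\{x\})\cup\{y\}$ and $B_\ell' = B_\ell$ for $\ell \ne j$. Condition on $A_0 = A \cap ([k]\smallsetminus\{x,y\})$ and write $a,b \in \{0,1\}$ for the indicators of $x \in A$ and $y \in A$. For $\ell \ne j$, the split condition on $B_\ell$ is unchanged by the swap and, since $y \notin B_\ell$, depends only on $A_0$ and $a$; and $|A\cap B_j'| - |A\cap B_j| = b - a$. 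Writing $S_a(A_0)$ for the set of $a$-values satisfying the $\ell \ne j$ split conditions, the old splitter count for a given $A_0$ equals $2h(A_0)$, where $h$ counts those $a \in S_a$ that additionally split $B_j$, while the new splitter count equals $|S_a(A_0)|\,g(A_0)$, where $g$ counts the values of $b$ that split $B_j'$. The task reduces to proving the aggregate inequality $\sum_{A_0} |S_a(A_0)|\,g(A_0) \le 2\sum_{A_0} h(A_0)$.

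The main obstacle is that this inequality fails pointwise in $A_0$: when $|S_a| = 1$ with $S_a = \{a_0\}$, the other split conditions force $a = a_0$ in the old configuration, but $b$ remains free in the new one, and there exist $A_0$ for which $a_0$ fails the $B_j$-split while $b = 1 - a_0$ succeeds for $B_j'$. Global cancellation must therefore come from pairing such ``bad'' $A_0$'s with ``good'' $A_0$'s via an involution on $\mathscr P([k]\smallsetminus\{x,y\})$. Small examples---including the full $n = 2$ analysis underlying Lemma~\ref{lem:disj-hard-even}---suggest that complementation of $A_0$ within a carefully chosen Venn region of $B_1,\ldots,B_n$ is the right involution, and that the resulting termwise comparison reduces once more to the unimodality of the binomial coefficients. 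Finding the correct Venn region and verifying that the pairing respects the multiplicities uniformly in $n$ is where I expect the proof to be most delicate, since $x$ may lie in several of the $B_\ell$ with $\ell \ne i,j$ and the pairing must cohere with all of those constraints simultaneously. Granting the swap lemma, the conjecture follows by induction, and plugging the resulting bound $P[\text{all } B_i \text{ split}] \gtrsim c_n / k^{n/2}$ into the probabilistic argument of the proof of Theorem~\ref{2-split-upper} yields the $O(g(n)k^{n/2+1})$ upper bound promised in the discussion.
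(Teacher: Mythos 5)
The statement you are addressing is presented in the paper as an \emph{open conjecture}, not a theorem: the authors write ``We close by conjecturing that the analog of Theorem~\ref{disjoint-hardest} holds for configurations of $n$ sets as well,'' and they give no proof. So there is no paper argument to compare against; the only question is whether your sketch closes the gap, and it does not. Your framing is the natural generalization of the paper's device for $n=2$ (Theorem~\ref{disjoint-hardest} via Lemmas~\ref{lem:disj-hard-even} and~\ref{lem:disj-hard-mixed}): reduce overlap one unit at a time by swapping a shared element $x\in B_i\cap B_j$ for a fresh element $y$ in $B_j$, adjoin a dummy element of $[k]$ when needed (as the paper does with $z$), and iterate. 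Your local bookkeeping is correct — conditioning on $A_0=A\cap([k]\smallsetminus\{x,y\})$, the old splitter count is $2h(A_0)$ and the new is $|S_a(A_0)|\,g(A_0)$, and the only failing case is $|S_a(A_0)|=1$, $g(A_0)=1$, $h(A_0)=0$.

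But that is precisely where the proof has to do work, and your proposal stops there. You never produce the involution on $\mathscr P([k]\smallsetminus\{x,y\})$ that is supposed to pair a bad $A_0$ (contributing $1>0$) with a good one (contributing $0$ or $1<2$ with slack), you only say that small examples ``suggest'' complementation in some Venn region. For $n\geq 3$ this is a genuinely new difficulty and not a routine extension of the $n=2$ computation: the set $S_a(A_0)$ is cut out by the intersection of the split conditions for \emph{every} $B_\ell$ with $x\in B_\ell$ and $\ell\neq j$, so the ``unique allowed $a$'' is determined jointly rather than by a single binomial balance, and it is not clear that any single region of the Venn diagram carries an involution compatible with all of those constraints at once. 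Even in the paper's $n=2$ proof, establishing this monotonicity required a three-way parity case split and a nontrivial termwise unimodality argument; your reduction to the ``swap lemma'' is a sensible way to organize the problem, but as written it restates the conjecture's difficulty rather than resolving it, and so it does not establish the statement.
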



\bibliographystyle{alpha}
\bibliography{separating}

\end{document}